\theoremstyle{plain} 
\newtheorem{theorem}{Theorem}[section]
\newtheorem{proposition}[theorem]{Proposition}
\newtheorem{corollary}[theorem]{Corollary}
\newtheorem{lemma}[theorem]{Lemma}
\theoremstyle{definition} 
\newtheorem{remark}[theorem]{Remark}
\newtheorem{definition}[theorem]{Definition}
\newcommand{\C}{\mathbb{C}}
\DeclareMathOperator{\Sym}{Sym}
\def\cC{\mathbb{C}}
\def\cL{\mathbb{L}}
\def\GL{\operatorname{GL}}
\begin{document}

\title{Motivic classes of generalized Kummer schemes via relative power structures}
\author{Andrew Morrison and Junliang Shen}

\address{Departement Mathematik, ETH Z\"urich}
\email{andrewmo@math.ethz.ch}

\address{Departement Mathematik, ETH Z\"urich}
\email{junliang.shen@math.ethz.ch}

 \begin{abstract} 
 We develop a power structure over the Grothendieck ring of varieties relative to an abelian monoid, which allows us to compute the motivic class of the generalized Kummer scheme. We obtain a generalized version of Cheah's formula for the Hilbert scheme of points, which specializes to Gulbrandsen's conjecture for Euler characteristics. Moreover, in the surface case we prove a conjecture of G\"{o}ttsche for geometrically ruled surfaces, and we obtain an explicit formula for the virtual motive of the generalized Kummer scheme in dimension three.
 \end{abstract}

\maketitle

\thispagestyle{empty}

\section{Introduction.}

We work over the complex numbers $\cC$. Let $X\to A$ be a Zariski locally trivial fibration with base $A$ a $g$-dimensional complex abelian variety and fiber $Y$ an $r$-dimensional smooth quasi-projective variety. Then the Hilbert scheme of $n$ points on $X$ is an iso-trivial fibration over $A$. The map $\pi_n : \textrm{Hilb}^n(X) \to A$ equals the Hilbert--Chow map $\textrm{Hilb}^n(X) \to \Sym^n(X) $, followed by projection $\Sym^n(X)\to \Sym^n(A)$, and addition $\Sym^n(A)\to A$. Following \cite{B} we call the fiber over zero the \textit{generalized Kummer scheme} of $n$ points 
\[K_n(X) :=\pi_n^{-1}(0_A) \subset \textrm{Hilb}^n(X).\]

When $g = 0$, the generalized Kummer scheme $K_n(X)$ is exactly the Hilbert scheme of points $\textup{Hilb}^n(X)$. Its cohomology was studied by Cheah \cite{Ch}. When $A$ is an abelian surface and $X = A$, the cohomology of $K_n(A)$ was first studied by G\"ottsche \cite{Lecture} using the Weil conjecture. Then the Hodge numbers were computed by G\"ottsche and Soergel \cite{GS} using the decomposition theorem for perverse sheaves. Both methods rely on the smoothness of $K_n(A)$. The Euler characteristics of $K_n(A)$ were also computed by Debarre \cite{De} and Gulbrandsen \cite{Gu} in different ways. 

Recently \cite{S} the second author provides a general formula for the Euler characteristics of the schemes $K_n(X)$, proving a conjecture of Gulbrandsen for abelian varieties \cite{Gul}. In particular, when $X$ is a surface or 3-fold and $g\geq 1$ we have
\[ \chi(K_n(X)) = \chi(Y)\cdot n^{2g-1}\cdot \sum_{d|n} d^{r+g-1}, \]
which is related to Donaldson--Thomas invariants of abelian 3-folds by \cite{Gul}. 

In this paper, we give a refinement of the formula for Euler characteristics $\chi([K_n(X)])$ in arbitrary dimension. Theorem \ref{thmgeneral} expresses the class $[K_n(X)]$ in terms of  the data of punctual Hilbert schemes in the Grothendieck ring $K_0(\textup{HS})$ of Hodge structures. This generalizes Cheah's formula and gives a new proof of Gulbrandsen's conjecture.  

When $X$ is a surface or 3-fold the following equality in $K_0(\textup{HS})[\mathbb{L}^{-\frac{1}{2}}]$ is given by Theorem \ref{surfkummer} and Theorem \ref{thm3fold},
\[ [A]_{\textrm{vir}}\cdot [K_n(X)]_{\textrm{vir}} = \sum_{\alpha \vdash n } g(\alpha)^{2g} \prod_{i\geq 1} \Sym^{b_i(\alpha)}([X\times \mathbb{P}^{(i-1)(r+g-2)}]_{\textrm{vir}})\]
here $b_i(\alpha)$ is the number of parts of $\alpha$ of size $i$, $g(\alpha) = \textrm{gcd}\{i : b_i(\alpha)\neq 0\}$, and for smooth varieties as above the virtual class $[M]_\textrm{vir}$ equals $\mathbb{L}^{-\frac{\dim(M)}{2}}\cdot[M]$. 

We can now take a refinement of the Euler characteristic such as the Hodge--Deligne polynomial. In the surface case $K_n(X)$ is smooth and when $g=0$ we recover G\"{o}ttsche's formula \cite[Theorem 1.1]{G}, when $g=2$ we recover G\"{o}ttsche and Soregel's result \cite[Theorem 7]{GS}, and when $g=1$ we have a proof of a conjecture of G\"{o}ttsche \cite[Conjecture 2.4.17]{Lecture}. In the 3-fold case $K_n(X)$ is a locally complete intersection, we define its virtual motive in Section 4 following \cite{BBS}. In particular when $g=0$ we recover the result of Behrend, Bryan, and Szendr\H{o}i \cite[Theorem 3.3]{BBS} while for $g\geq 1$ the formulas are new. For example, taking $X=K3\times E$ to be the product of a $K3$ surface and an elliptic curve and $X \to E$ to be the projection, we can compute the $\chi_{y}$ genus
\[\chi_{-y} ([K_n(X)]_{\textrm{vir}}) = (y^{-\frac{n}{2}}+y^{-\frac{n}{2}+1}+\cdots+y^{\frac{n}{2}}) \cdot \sum_{d\cdot m = n} d^2 (y^{-\frac{m}{2}} + 22 + y^{\frac{m}{2}})\]
here we see a $y\longleftrightarrow y^{-1}$ symmetry. Indeed for $X$ compact the Hodge numbers of $[K_n(X)]_{\textrm{vir}}$ have the same symmetries as that of a $3n-g$ dimensional compact K\"{a}hler manifold. In Section \ref{propsofhodge} other properties are also deduced.

In the paper our strategy is to work in the Grothendieck ring $K_0(\textup{Var}/A)$ of varieties relative over $A$. Here we develop the power structure and prove a few lemmas that combined are enough to deduce the main results of the paper. By computing the motivic class of the Hilbert scheme relative over $A$ and then pulling back over $0$ we deduce the above result for Kummer schemes via Lemma \ref{symkummer}. The final section of the paper includes an extension of the results to the motivic classes of stacks of torsion sheaves and we give explicit formulas for curves, surfaces, and 3-folds. 

\subsection{Acknowledgement}
We thank Georg Oberdieck, Rahul Pandharipande, and Qizheng Yin for discussions related to abelian geoemetries and motivic invariants.

This work was conducted in the research group of Pandharipande at the ETH Z\"{u}rich. A.M was supported by the Swiss National Science Foundation grant SNF 200021143274. J.S was supported by the grant ERC-2012-AdG-320368-MCSK.

\section{A Power Structure on Motives over an Abelian Monoid.}

\subsection{Relative Motives}
Let $(\mathfrak{M},+,0)$ be an abelian monoid. The Grothendieck ring $K_0(\textup{Var}/\mathfrak{M})$ has generators $[X\to \mathfrak{M}]$ and the usual cut and paste relations \cite{DM}. However the product of two classes $[X\to\mathfrak{M}]$ and $[Y\to\mathfrak{M}]$ equals
\[ \xymatrix{ X \times Y \ar[r] &  \mathfrak{M} \times \mathfrak{M} \ar[r]^{\hspace{0.4cm}+} & \mathfrak{M}}. \]
The effective classes $S_0(\textup{Var}/\mathfrak{M})$ form a sub-semi-ring of $K_0(\textup{Var}/\mathfrak{M})$. We describe a geometric power structure on this semi-ring. In general given a (semi)-ring $R$ a power structure is defined to be a map
\[\left(1+ tR[|t|] \right)\times R \to \left(1+ tR[|t|] \right), \]
denoted by $(A(t),r)\mapsto A(t)^r$ satisfying five axioms
\begin{enumerate} 
 \item $A(t)^0 = 1$,
 \item $A(t)^1 = A(t)$,
 \item $A(t)^n\cdot B(t)^n = \left( A(t)\cdot B(t)\right)^{n}$,
 \item $A(t)^{n+m} = A(t)^{n}\cdot A(t)^{m}$,
 \item $A(t)^{nm} = \left(A(t)^n\right)^m$.
\end{enumerate}  
To define the power structure on $S_0(\textup{Var}/\mathfrak{M})$, let $A(t) = \sum_{i\geq 0} [A_i]t^i$ be a series and $[M]$ an exponent such that the $A_i$ and $M$ are varieties over $\mathfrak{M}$. Then the series $A(t)^{[M]}$ has $t^n$ coefficient given by summing classes $B_\alpha$ over all the partitions $\alpha \vdash n$ 
\[ B_\alpha = \left( \prod_{i\geq 1} M^{b_i(\alpha)} \setminus \Delta \right) \times_{S_\alpha}  \left( \prod_{i\geq 1} A_i^{b_i(\alpha)} \right) \]
here $b_i(\alpha)$ is the number of parts of $\alpha$ of size $i$, $\Delta$ is the large diagonal, and the symmetric group $S_\alpha$ acts on the factors in the obvious way. To define the $\mathfrak{M}$ structure on the variety $B_\alpha$ we know that each of the factors has the structure of an $\mathfrak{M}$ variety. We take the projectors $p_{M,i}$ (and $p_{A,i}$) from the $i$th factor on the left (reps. right) side composed with the addition map to $\mathfrak{M}$. The map $p_\alpha : B_\alpha \to \mathfrak{M}$ is given by 
\[ p_\alpha = \sum_{i}\left( i \cdot p_{M,i} + p_{A,i}\right). \]
Now $A(t)^N$ has a geometric description in terms of collections of charged particles $(S,\phi)$ on $M$. Here $S$ is a finite subset of $M$ and
\[ \phi : S \to \coprod_{i=1}^\infty A_i .\]
We consider $\phi(s)$ as the internal state of particle $s\in S$. Every such particle $s\in S$ with $\phi(s)\in A_i$ has a multiplicity and weight
\begin{eqnarray*}
m(s,\phi) & = & i \in \mathbb{N} \\
w(s,\phi) & = & i\cdot p_{M\to \mathfrak{M}}(s) + p_{A_i\to \mathfrak{M}}(\phi(s)) \in \mathfrak{M}.
\end{eqnarray*}
The $t^n$ coefficient of $A(t)^N$ parameterizes collections $(S,\phi)$ with total multiplicity $n$ and the map to $\mathfrak{M}$ is given by the total weight.  

\begin{theorem} The above geometric construction is a power structure on $S_0(\textup{Var}/\mathfrak{M})$.
\end{theorem}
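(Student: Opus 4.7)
The plan is to verify each of the five axioms directly from the charged-particle description just given. The underlying variety-level bijections are the standard ones used by Gusein-Zade--Luengo--Melle-Hern\'andez in the absolute case (when $\mathfrak{M}$ is trivial), so the new content here is to check that the weight maps $p_\alpha$ to $\mathfrak{M}$ are respected under each of these bijections.

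Axioms (1), (2), and (4) I expect to be immediate: (1) holds because $M = \emptyset$ forces the empty configuration; (2) holds because a configuration on a point is just a choice of internal state, and then $p_\alpha$ collapses to $p_{A_i}$ itself; (4) follows by decomposing a configuration on $M_1 \sqcup M_2$ into its restrictions and noting that $p_\alpha$ is additive under disjoint union by construction. For axiom (3), the $t^n$ coefficient on both sides parameterizes a configuration $(S,\phi)$ on $M$ together with a pointwise splitting of each internal state into an $A$-state $a \in A_j$ and a $B$-state $b \in B_k$ of multiplicities summing to $m(s,\phi)$; the weight compatibility reduces to the pointwise identity
\[
(j+k)\, p_M(s) + p_{A_j}(a) + p_{B_k}(b) = \bigl(j\, p_M(s) + p_{A_j}(a)\bigr) + \bigl(k\, p_M(s) + p_{B_k}(b)\bigr),
\]
which is exactly the Minkowski-sum convention for the product in $K_0(\textup{Var}/\mathfrak{M})$.

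The main obstacle is axiom (5), the composition identity $(A(t)^N)^L = A(t)^{NL}$. Unwinding the definitions, a left-hand side configuration of total multiplicity $n$ is a pair $(S,\Psi)$ with $S \subset L$ finite and $\Psi$ assigning to each $\ell \in S$ a charged-particle configuration $(T_\ell,\phi_\ell)$ on $N$ whose total multiplicity equals the outer multiplicity $m(\ell,\Psi)$; a right-hand side configuration is a single charged-particle configuration on $N \times L$ of total multiplicity $n$. I plan to construct the natural bijection sending $(S,\Psi)$ to the configuration on $N \times L$ supported on $\bigsqcup_{\ell \in S} T_\ell \times \{\ell\}$ with internal state $\phi_\ell(t)$ and multiplicity $m(t,\phi_\ell)$ at $(t,\ell)$; matching the symmetric group quotients on both sides is a standard combinatorial exercise on partitions of partitions. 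The crux is the weight identity
\[
\sum_{\ell \in S}\bigl(m(\ell,\Psi)\, p_L(\ell) + w(\ell,\Psi)\bigr) = \sum_{\ell, t}\bigl(m(t,\phi_\ell)\,(p_N(t) + p_L(\ell)) + p_{A_i}(\phi_\ell(t))\bigr),
\]
which follows by expanding $w(\ell,\Psi) = \sum_t \bigl(m(t,\phi_\ell)\, p_N(t) + p_{A_i}(\phi_\ell(t))\bigr)$ via the recursive definition of $p_\alpha$ applied to the inner configuration, and using $m(\ell,\Psi) = \sum_t m(t,\phi_\ell)$ to redistribute the $p_L(\ell)$ contributions across the particles. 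I anticipate the main technical care will be in tracking the symmetric-group actions in both layers simultaneously, particularly when particles of the same multiplicity occur repeatedly.
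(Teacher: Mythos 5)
Your proposal is correct and follows essentially the same route as the paper: the paper likewise reduces to the five bijections of Gusein-Zade--Luengo--Melle-Hern\'andez in the absolute case $\mathfrak{M}=\textup{Spec}(\cC)$ and observes that each bijection preserves the total weight $w(S,\phi)$, which is exactly the check you carry out explicitly (your weight identities for axioms (3) and (5) are the "routine" verifications the paper leaves to the reader). No gaps; your spelled-out version of axiom (5), using $m(\ell,\Psi)=\sum_t m(t,\phi_\ell)$ to redistribute the $p_L(\ell)$ contributions, is precisely why the weight is compatible with the Minkowski-sum convention on $N\times L$.
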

\begin{proof} The basic version of this theorem was the main result of \cite{GZLMH} where this result was proven for the abelian monoid $\mathfrak{M} = \textup{Spec}(\cC)$ a point. 

It is routine to check that each of the five bijections in \cite[Theorem 1]{GZLMH} preserve the total weight $w(S,\phi)$ of the configurations $(S,\phi)$ so axioms $(1)-(5)$ hold for the power structure defined on the ring $S_0(\textup{Var}/\mathfrak{M})$.
\end{proof}

\begin{theorem}\label{extthm} The power structure on $S_0(\textup{Var}/\mathfrak{M})$ uniquely extends to $K_0(\textup{Var}/\mathfrak{M})$.
\end{theorem}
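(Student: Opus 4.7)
The strategy is to extend in two stages: first allow the exponent to lie in $K_0(\textup{Var}/\MM)$ while the base series stays in $1+tS_0(\textup{Var}/\MM)[|t|]$, and then extend the base series to $1+tK_0(\textup{Var}/\MM)[|t|]$. Uniqueness at each stage is automatic: axioms (4) and (3) respectively say that $[M]\mapsto A(t)^{[M]}$ and $A(t)\mapsto A(t)^{[M]}$ are monoid homomorphisms from $S_0(\textup{Var}/\MM)$ and $1+tS_0(\textup{Var}/\MM)[|t|]$ into the abelian group $(1+tK_0(\textup{Var}/\MM)[|t|],\cdot)$, and any monoid homomorphism into a group extends in at most one way to the Grothendieck group.

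For the first stage, fix $A(t)\in 1+tS_0(\textup{Var}/\MM)[|t|]$. Given $[M]=[M_+]-[M_-]\in K_0(\textup{Var}/\MM)$ with $[M_\pm]\in S_0(\textup{Var}/\MM)$, set
$$A(t)^{[M]}:= A(t)^{[M_+]}\cdot\rbrac{A(t)^{[M_-]}}^{-1}.$$
If $[M_+]-[M_-]=[N_+]-[N_-]$ in $K_0$, then a witness $[P]\in S_0$ for $[M_+]+[N_-]+[P]=[N_+]+[M_-]+[P]$ together with axiom (4) gives independence of the decomposition.

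For the second stage, the key lemma is that every $A(t)\in 1+tK_0(\textup{Var}/\MM)[|t|]$ admits a ratio presentation $A(t)=B(t)/C(t)$ with $B(t),C(t)\in 1+tS_0(\textup{Var}/\MM)[|t|]$. I would construct $C(t)$ by induction on degree: once $c_1,\ldots,c_{n-1}\in S_0$ are fixed, the $n$-th coefficient of $A(t)C(t)$ has the form $c_n+r_n$ for some $r_n\in K_0$ determined by the earlier data; writing $r_n=r_n^+-r_n^-$ with $r_n^\pm\in S_0$ and setting $c_n:=r_n^-$ forces that coefficient to equal $r_n^+\in S_0$. Then define
$$A(t)^{[M]}:= B(t)^{[M]}\cdot\rbrac{C(t)^{[M]}}^{-1}$$
using the first-stage power, and check independence of $(B,C)$ via axiom (3): from $B_1/C_1=B_2/C_2$ one obtains $B_1C_2=B_2C_1$ in $1+tS_0[|t|]$, hence $(B_1C_2)^{[M]}=(B_2C_1)^{[M]}$, which rearranges to the desired equality in $1+tK_0[|t|]$.

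Verification of the five axioms for the extension is then routine: each reduces to the corresponding axiom on $S_0$ by clearing denominators. The main obstacle is the inductive decomposition $A=B/C$, where one must absorb the negative parts of the coefficients of $A$ into an effective denominator; axiom (5) also requires some bookkeeping since its two sides involve iterated application of the extended power, but unwinding the ratio and the decomposition $[M]=[M_+]-[M_-]$ ultimately reduces it to axiom (5) on $S_0$.
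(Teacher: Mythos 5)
Your proposal is correct and follows essentially the same route as the paper: the paper's proof likewise rests on presenting an arbitrary series as a ratio $A(t)\cdot B(t)=C(t)$ with $B,C$ effective and on defining $B(t)^{-[M]}:=1/B(t)^{[M]}$ to handle virtual exponents. You have simply filled in the details the paper leaves implicit (the inductive construction of the effective denominator, the well-definedness checks via a witness class $[P]$ and axioms (3)--(4), and the uniqueness argument), all of which are consistent with the paper's sketch.
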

\begin{proof} For each series $A(t)$ with coefficients in $K_0(\textup{Var}/\mathfrak{M})$ there exist series $B(t)$, $C(t)$ with coefficients in $S_0(\textup{Var}/\mathfrak{M})$ such that $A(t)\cdot B(t) = C(t)$. It suffices to define $B(t)^{-[M]}$ for $M$ a variety. For series with leading coefficient $1$ this is uniquely determined by 
\[ B(t)^{-[M]} := 1/B(t)^{[M]}. \]
\end{proof}

Next we prove two lemmas useful for formal manipulations.

\begin{lemma}\label{lef} Define the Lefschetz motive $\mathbb{L} = \left[ \mathbb{A}^1 \to 0\in \mathfrak{M}  \right] $ and $s\in \mathbb{N}$ then
\[ A(\mathbb{L}^st)^{[M]} =  \left. \left(A(t)^{[M]} \right) \right|_{t\mapsto \mathbb{L}^st} .\]
\end{lemma}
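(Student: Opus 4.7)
The plan is to compute both sides coefficient-by-coefficient using the explicit configuration-space definition of the power structure, and then to match them up as classes in $K_0(\textup{Var}/\mathfrak{M})$. First I expand $A(\mathbb{L}^s t) = \sum_{i\geq 0}[A_i\times \mathbb{A}^{si}]\, t^i$, in which each coefficient $A_i\times \mathbb{A}^{si}$ carries the $\mathfrak{M}$-structure pulled back from $A_i$, since the factor $\mathbb{A}^{si}$ maps to $0\in\mathfrak{M}$ by definition of $\mathbb{L}$. By the construction of the power structure, the $t^n$ coefficient of $A(\mathbb{L}^s t)^{[M]}$ is then $\sum_{\alpha\vdash n} B'_\alpha$ where
\[
B'_\alpha = \Bigl(\prod_i M^{b_i(\alpha)}\setminus\Delta\Bigr) \times_{S_\alpha} \Bigl(\prod_i (A_i\times \mathbb{A}^{si})^{b_i(\alpha)}\Bigr),
\]
while the $t^n$ coefficient of $A(t)^{[M]}\big|_{t\mapsto \mathbb{L}^s t}$ equals $\mathbb{L}^{sn}\sum_{\alpha\vdash n}B_\alpha$. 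It therefore suffices to prove $[B'_\alpha]=\mathbb{L}^{sn}\cdot [B_\alpha]$ in $K_0(\textup{Var}/\mathfrak{M})$ for every partition $\alpha\vdash n$.

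For this, I rearrange factors so that $B'_\alpha$ becomes the $S_\alpha$-quotient of $U\times F\times V$, where $U=\prod_i M^{b_i(\alpha)}\setminus\Delta$, $F=\prod_i A_i^{b_i(\alpha)}$, and $V=\prod_i(\mathbb{A}^{si})^{b_i(\alpha)}\cong\mathbb{A}^{sn}$ carries the linear $S_\alpha$-action that permutes the coordinate blocks. The projection forgetting $V$ yields a morphism $B'_\alpha\to B_\alpha$. Because $S_\alpha$ acts freely on $U$ (this is where the removal of the large diagonal is essential), $U\to U/S_\alpha$ is a principal $S_\alpha$-bundle, and $B'_\alpha\to B_\alpha$ is the associated algebraic vector bundle of rank $sn$ attached to the representation $V$. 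Since any rank-$r$ algebraic vector bundle over a variety has motivic class $\mathbb{L}^r$ times the class of its base, one concludes $[B'_\alpha]=\mathbb{L}^{sn}\cdot [B_\alpha]$ in $K_0(\textup{Var})$. The $\mathfrak{M}$-structures also match: the weight on $B'_\alpha$ of a particle in state $(a,v)\in A_i\times\mathbb{A}^{si}$ equals $i\cdot p_M(s)+p_{A_i}(a)+0$, so the weight map factors through the bundle projection $B'_\alpha\to B_\alpha$. Hence the equality lifts to $K_0(\textup{Var}/\mathfrak{M})$.

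The identity then extends from effective series to all of $1+tK_0(\textup{Var}/\mathfrak{M})\pser{t}$ via Theorem~\ref{extthm}: writing $A(t)=C(t)/B(t)$ with $B,C\in 1+tS_0(\textup{Var}/\mathfrak{M})\pser{t}$, one applies the already-established identity to $B$ and $C$ separately and then divides. I expect the main technical point to be the vector bundle identification of $B'_\alpha\to B_\alpha$; this rests entirely on freeness of the $S_\alpha$-action on $U$, which is precisely what the removal of $\Delta$ provides.
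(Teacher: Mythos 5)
Your proof is correct and follows essentially the same route as the paper: both compare coefficients partition by partition and identify the quotient $B'_\alpha \to B_\alpha$ as a rank-$sn$ vector bundle that is Zariski-locally trivial (the paper invokes Hilbert's Theorem 90 for the splitting), hence $[B'_\alpha]=\mathbb{L}^{sn}[B_\alpha]$. Your extra details --- the associated-bundle construction via freeness of the $S_\alpha$-action off the large diagonal, the check that the weight maps to $\mathfrak{M}$ agree, and the extension to non-effective series via Theorem~\ref{extthm} --- only make explicit what the paper's proof leaves implicit.
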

\begin{proof} The coefficients on the left hand side are of the form
\[ \left( \prod_{i\geq 1} M^{b_i(\alpha)} \setminus \Delta \right) \times_{S_\alpha}  \left( \prod_{i\geq 1} \left(A_i \times \mathbb{A}^i\right)^{b_i(\alpha)} \right) \]
the coefficients on the right hand side are of the form
\[ \mathbb{A}^n\times \left( \prod_{i\geq 1} M^{b_i(\alpha)} \setminus \Delta \right) \times_{S_\alpha}  \left( \prod_{i\geq 1} A_i ^{b_i(\alpha)} \right) . \]
We have two vector bundles of rank $n$ over the same base. By Hilbert's Theorem 90 they split in the Zariski topology and so have the same motivic class. 
\end{proof}

\begin{lemma}\label{npow} For $n\in \mathbb{N}$ define the map $n: \mathfrak{M} \to \mathfrak{M} $ as multiplication by $n$. This induces a push-forward $n_*: K_0(\textup{Var}/\mathfrak{M}) \to  K_0(\textup{Var}/\mathfrak{M})$ so that \[ A(t^n)^{[M]} = \left. A(t)^{n_*[M]} \right|_{ t \mapsto t^n }. \]
\end{lemma}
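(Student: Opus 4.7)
The plan is to verify the identity for effective classes $[A_i], [M] \in S_0(\textup{Var}/\mathfrak{M})$ by a direct coefficient-by-coefficient comparison, then extend to $K_0(\textup{Var}/\mathfrak{M})$ via Theorem~\ref{extthm}. Both sides are power series in $t$ whose nonzero coefficients live in degrees divisible by $n$, so it is enough to compare the $t^{nk}$-coefficient for each $k \geq 0$.

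For the left-hand side, writing $A(t^n) = \sum_{i \geq 0} [A_i] t^{ni}$ and unpacking the definition of the power structure, the $t^{nk}$-coefficient is indexed by partitions $\beta \vdash nk$ all of whose parts are divisible by $n$. Such $\beta$ correspond bijectively to partitions $\alpha \vdash k$ via $\beta_j = n\alpha_j$, and they satisfy $b_{ni}(\beta) = b_i(\alpha)$. The summand indexed by $\beta$ has underlying variety
\[ \left( \prod_{i \geq 1} M^{b_i(\alpha)} \setminus \Delta \right) \times_{S_\alpha} \left( \prod_{i \geq 1} A_i^{b_i(\alpha)} \right), \]
and its map to $\mathfrak{M}$ sends a part of size $ni$ to $ni \cdot p_{M,i} + p_{A,i}$. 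For the right-hand side, $n_*[M]$ leaves the variety $M$ unchanged and replaces its structure map $p_M$ by $n \cdot p_M$. The $t^k$-coefficient of $A(t)^{n_*[M]}$ is indexed by the same partitions $\alpha \vdash k$ with the same underlying variety, and its map to $\mathfrak{M}$ now sends a part of size $i$ to $i \cdot (n \cdot p_{M,i}) + p_{A,i} = ni \cdot p_{M,i} + p_{A,i}$. Substituting $t \mapsto t^n$ places this coefficient in the $t^{nk}$-slot, matching the left-hand side term-by-term.

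For general $A(t) \in 1 + t K_0(\textup{Var}/\mathfrak{M})\pser{t}$ and $[M] \in K_0(\textup{Var}/\mathfrak{M})$, choose effective series $B(t), C(t)$ with $A(t) \cdot B(t) = C(t)$ and write $[M] = [M_1] - [M_2]$ with $M_1, M_2$ varieties. By axioms (3) and (4) both sides of the identity are multiplicative in $A(t)$ and additive in $[M]$, and $n_*$ is linear on $K_0$. Since the identity holds for $B, C$ and $M_1, M_2$, it extends uniquely to $A$ and $[M]$. The only obstacle is a bookkeeping one: one needs to observe that the substitution $t \mapsto t^n$ on the left and the rescaling of $p_M$ by $n$ on the right enter the weight of a charged particle $s \in M$ with $\phi(s) \in A_i$ as the same factor $ni$ multiplying $p_{M,i}(s)$, so the two operations are indistinguishable at the level of configurations.
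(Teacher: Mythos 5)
Your proposal is correct and follows essentially the same route as the paper: verify the identity for effective classes by matching the geometric description of the coefficients on both sides (partitions of $nk$ with all parts divisible by $n$ correspond to partitions of $k$, and a part of size $ni$ contributes the same weight $ni\cdot p_{M,i}+p_{A,i}$ to the map to $\mathfrak{M}$ whether the factor $n$ comes from the substitution $t\mapsto t^n$ or from $n_*$ rescaling the structure map of $M$), then extend to $K_0(\textup{Var}/\mathfrak{M})$ by the uniqueness in Theorem~\ref{extthm}. The paper's proof is a one-line version of exactly this comparison of charged-particle weights; your write-up just makes the bookkeeping explicit.
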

\begin{proof} When $n=1$ the result is immediate. In general for $n>1$ we can prove the results for effective classes by the uniqueness of the extension constructed in Theorem \ref{extthm}. The case of effective classes follows from the geometric description above by comparing the weight of the charged particles on either side.
\end{proof}

\begin{definition} We define the zeta function of $[X\to\mathfrak{M}] \in K_0(\textup{Var}/\mathfrak{M})$ by
\begin{equation*}
\begin{split}
 \zeta_{[X\to\mathfrak{M}]}(t) := & \left( 1 + t + t^2 + t^3\cdots \right)^{[X\to\mathfrak{M}]} \\
                                               = &\sum_{n \geq 0} [\textup{Sym}^n(X) \xrightarrow{+} \mathfrak{M}]t^n.
 \end{split}
\end{equation*} 
This series has some nice properties, for example we have $\zeta_{[A]}(t)^{[B]} = \zeta_{[B]\cdot[A]}(t)$ and each series $A(t)$ has a unique factorization $A(t) = \prod_{n\geq 1} \zeta_{[A_n]}(t^n)$. 
\end{definition}

\begin{lemma}[cf. \cite{G}]\label{zeta}  For the above power structure on $ K_0(\textup{Var}/\mathfrak{M})$ we have
\[ \zeta_{[A]}(t)^{\mathbb{L}^k} = \zeta_{\mathbb{L}^k[A]}(t) = \zeta_{[A]}(\mathbb{L}^kt) \textup{ for all } k\in \mathbb{N}.\]
\end{lemma}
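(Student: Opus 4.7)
The plan is to deduce both equalities from axiom (5) of the power structure --- equivalently, the identity $\zeta_{[A]}(t)^{[B]} = \zeta_{[A][B]}(t)$ noted right after the definition of the zeta function --- combined with Lemma \ref{lef} and the classical G\"ottsche identity $[\Sym^n(\mathbb{A}^k)] = \mathbb{L}^{kn}$ in $K_0(\textup{Var})$.

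For the first equality I would apply axiom (5) directly to $\zeta_{[A]}(t) = (1+t+t^2+\cdots)^{[A]}$:
\[\zeta_{[A]}(t)^{\mathbb{L}^k} = \bigl((1+t+t^2+\cdots)^{[A]}\bigr)^{\mathbb{L}^k} = (1+t+t^2+\cdots)^{[A]\cdot\mathbb{L}^k} = \zeta_{\mathbb{L}^k[A]}(t).\]

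For the second equality I would reassociate the exponent the other way using axiom (5):
\[\zeta_{\mathbb{L}^k[A]}(t) = (1+t+\cdots)^{\mathbb{L}^k\cdot[A]} = \bigl((1+t+\cdots)^{\mathbb{L}^k}\bigr)^{[A]} = \zeta_{\mathbb{L}^k}(t)^{[A]}.\]
The inner factor unpacks via the definition of $\zeta$ and the G\"ottsche identity as
\[\zeta_{\mathbb{L}^k}(t) = \sum_{n\geq 0} [\Sym^n(\mathbb{A}^k)\to 0]\,t^n = \sum_{n\geq 0} \mathbb{L}^{kn}\, t^n,\]
which is precisely the series $1+t+t^2+\cdots$ with $t$ replaced by $\mathbb{L}^k t$. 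Substituting this into the previous display and invoking Lemma \ref{lef} with exponent $[A]$ then yields
\[\zeta_{\mathbb{L}^k}(t)^{[A]} = \bigl((1+t+\cdots)^{[A]}\bigr)\big|_{t\mapsto\mathbb{L}^k t} = \zeta_{[A]}(\mathbb{L}^k t),\]
which is the desired identity.

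The only non-formal ingredient is the G\"ottsche identity $[\Sym^n(\mathbb{A}^k)] = \mathbb{L}^{kn}$; since both $\Sym^n(\mathbb{A}^k)$ and $\mathbb{A}^{kn}$ sit over $0\in\mathfrak{M}$, the required equality in $K_0(\textup{Var}/\mathfrak{M})$ reduces to the absolute one, which is the content of the cited \cite{G} (and can alternatively be obtained by a straightforward stratification argument). Everything else is formal bookkeeping with the five axioms of the power structure together with Lemma \ref{lef}.
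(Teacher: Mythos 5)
Your proof is correct, and the geometric content it ultimately rests on is the same as the paper's, but the decomposition is slightly different. The paper reduces both equalities in one step to the single relative identity $[\Sym^n(\mathbb{A}^k\times X)]=\mathbb{L}^{nk}\cdot[\Sym^n(X)]$ (note that $\zeta_{\mathbb{L}^k[A]}(t)$ and $\zeta_{[A]}(\mathbb{L}^kt)$ have exactly these classes as their $t^n$-coefficients), and proves that identity directly by stratifying over $\Sym^n(X)$ and applying Hilbert's Theorem 90. You instead carry only the absolute Totaro identity $[\Sym^n(\mathbb{A}^k)]=\mathbb{L}^{kn}$ as geometric input and recover the relative statement formally, by reassociating the exponent via axiom (5) through $\zeta_{\mathbb{L}^k}(t)^{[A]}$ and then invoking Lemma \ref{lef}. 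Since the proof of Lemma \ref{lef} is itself the same stratification-plus-Hilbert-90 argument (the paper even says ``as in Lemma \ref{lef}''), the two routes are doing the same work in different places; yours has the minor advantage of isolating the non-formal input in an absolute statement over a point, at the cost of an extra appeal to axiom (5), while the paper's is shorter because it proves the relative coefficient identity in one stroke. All the formal steps you use (axiom (5) for effective exponents, the descent of the geometric power structure to classes, and the fact that $\Sym^n(\mathbb{A}^k)$ sits over $0\in\mathfrak{M}$) are justified by what precedes the lemma, so there is no gap.
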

\begin{proof} We must show that $[\textup{Sym}^n(\mathbb{A}^k\times X)] = \mathbb{L}^{n\cdot k }\cdot [\textup{Sym}^n(X)]$. Stratifying the left hand side into a collection of vector bundles this follows simply from Hilbert's Theorem 90 as in Lemma \ref{lef}. 
\end{proof}

\begin{theorem}\label{localps} There exists a unique extension of the power structure to the ring $K_0(\textup{Var}/\mathfrak{M})[\mathbb{L}^{-1}]$ with the property that 
\[ \zeta_{[A]}(t)^{\mathbb{L}^k} = \zeta_{\mathbb{L}^k[A]}(t) = \zeta_{[A]}(\mathbb{L}^kt) \textup{ for all } k\in \mathbb{Z}.\]
\end{theorem}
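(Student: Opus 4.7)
The strategy is to use the unique Euler-product factorization to propagate the prescribed property from basic $\zeta$-series to arbitrary series. The first ingredient is that every $A(t)\in 1+tK_0(\textup{Var}/\mathfrak{M})[\mathbb{L}^{-1}][[t]]$ admits a unique factorization $A(t)=\prod_{n\geq1}\zeta_{[A_n]}(t^n)$ with $[A_n]\in K_0(\textup{Var}/\mathfrak{M})[\mathbb{L}^{-1}]$; the classes $[A_n]$ are determined inductively by polynomial identities in the previously-computed coefficients, so the factorization makes sense over any coefficient ring.

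Given the factorization, for $r=\mathbb{L}^{-N}r_0$ with $N\in\mathbb{N}$ and $r_0\in K_0(\textup{Var}/\mathfrak{M})$, I would set
\[
A(t)^{\mathbb{L}^{-N}}:=\prod_{n\geq1}\zeta_{[A_n]}(\mathbb{L}^{-N}t^n),\qquad A(t)^{r}:=\bigl(A(t)^{\mathbb{L}^{-N}}\bigr)^{r_0},
\]
with the outer exponentiation coming from Theorem \ref{extthm}. By construction the prescribed property holds for $k<0$, while for $k\geq0$ it is Lemma \ref{zeta}; restricting exponents to $K_0(\textup{Var}/\mathfrak{M})$ recovers the original structure. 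Axioms $(1),(2),(4)$ are immediate from the definition; axiom $(5)$ reduces to unwinding both orders of substitution; axiom $(3)$ follows because the substitution $t^n\mapsto\mathbb{L}^{-N}t^n$ distributes across products of Euler factors, together with axiom $(3)$ for the unlocalised structure. Well-definedness---independence of the representation $r=\mathbb{L}^{-N}r_0$---amounts to the identity $A(t)^{\mathbb{L}^{-(N+1)}\cdot\mathbb{L}}=A(t)^{\mathbb{L}^{-N}}$, which follows factor-by-factor from Lemma \ref{zeta}.

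Uniqueness is the part requiring the most care, and is where I expect the main obstacle to lie. Suppose $\star$ is any second extension satisfying the prescribed property; axiom $(3)$ combined with the Euler factorization reduces the question to determining $\zeta_{[A]}(t^n)^{\star\,\mathbb{L}^{-N}}$. The difficulty is to upgrade Lemma \ref{npow} to negative $\mathbb{L}$-powers. For the positive exponent $\mathbb{L}^N$ the lemma applies directly, since $n_*\mathbb{L}^N=\mathbb{L}^N$ (multiplication by $n$ fixes $0\in\mathfrak{M}$), giving $\zeta_{[A]}(t^n)^{\mathbb{L}^N}=\zeta_{[A]}(\mathbb{L}^Nt^n)$. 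Inverting via axiom $(5)$ and the identity $(A^{\mathbb{L}^{-N}})^{\mathbb{L}^N}=A$, which the prescribed property and axioms force, then pins down $\zeta_{[A]}(t^n)^{\star\,\mathbb{L}^{-N}}=\zeta_{[A]}(\mathbb{L}^{-N}t^n)$, matching the formula above and completing the uniqueness argument.
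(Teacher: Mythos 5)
Your argument is correct and follows essentially the same route as the paper's: factor the series into an Euler product of zeta functions, use Lemma \ref{npow} to move exponents past the substitution $t\mapsto t^n$, and let the prescribed identity $\zeta_{[A]}(t)^{\mathbb{L}^k}=\zeta_{\mathbb{L}^k[A]}(t)=\zeta_{[A]}(\mathbb{L}^kt)$ force the value of each factor, giving existence and uniqueness simultaneously. The only real difference is cosmetic: the paper writes the factors as $\zeta_{\mathbb{L}^{-a_n}[A_n]}(t^n)$ with $[A_n]\in K_0(\textup{Var}/\mathfrak{M})$ from the outset, which sidesteps the need to explain separately what $\zeta_{[B]}$ means for $[B]$ in the localized ring and lets the whole computation be done in one chain of equalities rather than in your two stages (first the $\mathbb{L}^{-N}$ power, then the effective exponent).
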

\begin{proof}Given a series $A(t)$ with coefficients in $K_0(\textup{Var}/\mathfrak{M})[\mathbb{L}^{-1}]$ there is a factorization $A(t) = \prod_{n\geq 1} \zeta_{\mathbb{L}^{-a_n}[A_n]}(t^n)$ with $[A_n]\in K_0(\textup{Var}/\mathfrak{M})$. Taking an exponent $[M]\in K_0(\textup{Var}/\mathfrak{M})$ we see that
\begin{eqnarray*}
 \zeta_{\mathbb{L}^{-a_n}[A_n]}(t^n)^{\mathbb{L}^{-m}[M]} &=& \left.  \zeta_{\mathbb{L}^{-a_n}[A_n]}(t)^{n_*(\mathbb{L}^{-m}[M])} \right|_{t \mapsto t^n} \\
       &=& \left.  \zeta_{(\mathbb{L}^{-m-a_n}) \cdot[A_n]}(t)^{n_*([M])} \right|_{t \mapsto t^n} \\
      &=& \left.  \zeta_{n_*([M])\cdot[A_n]}(\mathbb{L}^{-m-a_n}  t) \right|_{t \mapsto t^n} 
\end{eqnarray*}
this gives a unique extension of the power structure. 
\end{proof}

Following Behrend, Bryan, and Szendr\H{o}i \cite{BBS} we can further extend the power structure to the rings $K_0(\textup{Var}/\mathfrak{M})[\mathbb{L}^{-1/2}]$ where the elements $\mathbb{L}^{-1/2}$ are adjoined formally. Indeed the above proof goes through as in \cite[Section 1.5]{BBS} and there is a unique extension of the power structure such that
\[  \zeta_{[A]}(t)^{\mathbb{L}^k} = \zeta_{\mathbb{L}^k[A]}(t) = \zeta_{[A]}(\mathbb{L}^kt) \textup{ for all } k\in (1/2)\mathbb{Z}. \]

\subsection{Kummer fibers.} Given a motivic class $[X\to \mathfrak{M}]\in K_0(\textup{Var}/\mathfrak{M})[\mathbb{L}^{-1/2}]$ we are interested in the fiber over zero. Let $0 : \textup{Spec}(\cC) \to \mathfrak{M}$ be the map sending a point to the additive identity in $\mathfrak{M}$. This induces the pullback
\[0^*:  K_0(\textup{Var} /\mathfrak{M})[\mathbb{L}^{-1/2}] \to K_0(\textup{Var}_\cC )[\mathbb{L}^{-1/2}]\]
sending $ [X\to \mathfrak{M}] $ to $[\textup{Spec}(\cC)\times_{\mathfrak{M}} X \to \textup{Spec}(\cC) ]$ when $X$ is a variety. Note that the map $0^\ast$ is a group homomorphism.

We have seen the appearance of symmetric products in the definition of the power structure. Here we prove a lemma to be used later regarding symmetric products of abelian varieties and their ``Kummer fibers'', i.e. fibers over zero. While we would like to prove these results in the Grothendieck ring of motivic classes we can only find a proof at the level of Hodge structures.

\begin{lemma}[cf. \cite{GS} Theorem 7, Step (7)] \label{symkummer} 
Let $X_m\to A$ for $m=1,\ldots, l$ be a collection of Zariski trivial fiber bundles over a $g$-dimensional complex abelian variety $A$ then,
\[ [A]\cdot0^*\left( \prod_{m=1}^l m_*\left([\textup{Sym}^{r_m}(X_m)\to A] \right)\right) = g(r)^{2g} \cdot  \prod_{m=1}^l [\textup{Sym}^{r_m}(X_m)]  \]
in the ring $K_0(\textup{HS})$ of Hodge structures with $g(r) = \textup{gcd}(\{ m: r_m \neq 0 \}).$
\end{lemma}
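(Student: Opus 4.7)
The strategy is to lift along the étale cover from ordered to unordered symmetric products, realize the fiber over $0$ as (a product involving) the kernel of a homomorphism of abelian varieties, and extract both the component count $g(r)^{2g}$ and the factor $[A]$ via the representation theory of $G = \prod_m S_{r_m}$. Since each $X_m \to A$ is Zariski locally trivial with fiber $Y_m$, a stratification of $A$ shows $[X_m \to A] = [A \times Y_m \to A]$ in $K_0(\textup{Var}/A)$; as the operations $\Sym^{r_m}(-)$, $m_\ast$, and $0^\ast$ depend only on relative classes, we may assume $X_m = A \times Y_m$. Pass to the Galois cover $\prod_m (A \times Y_m)^{r_m} \to \prod_m \Sym^{r_m}(A \times Y_m)$ with group $G$. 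The map $\phi = \sum_m m \cdot \sigma_m$ lifts to $\tilde\phi \colon \prod_m (A \times Y_m)^{r_m} \to A$, which factors through the projection to $\prod_m A^{r_m}$, so $\tilde\phi^{-1}(0) = K \times \prod_m Y_m^{r_m}$, where $K = \ker(\tilde\phi_A \colon \prod_m A^{r_m} \to A)$.

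The closed subgroup $K$ is smooth and proper, with identity component $K^0$ an abelian variety of dimension $g(\sum_m r_m - 1)$. Its component group satisfies $|\pi_0(K)| = g(r)^{2g}$: dualize $\tilde\phi_A$ to $\widehat{\tilde\phi_A} \colon \hat A \to \prod_m \hat A^{r_m}$, $\alpha \mapsto ((m\alpha, \ldots, m\alpha))_m$, whose kernel is $\hat A[g(r)]$, and $\pi_0(K)^{\vee} \cong \ker(\widehat{\tilde\phi_A})$. Moreover, $G$ acts on $K$ by automorphisms (permutations of factors) and trivially on $\pi_0(K)$: for $g \in G$ and $x \in K$, the block-wise difference $gx - x$ has block-sum zero in every factor, so lies in $\prod_m \Sigma_{r_m}^{-1}(0) \subseteq K^0$. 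Consequently, since translations act trivially on the cohomology of abelian varieties, $[K] = g(r)^{2g} \cdot [K^0]$ in the ring of $G$-equivariant Hodge structures; taking $G$-invariants yields
\[
[\phi^{-1}(0)] \;=\; g(r)^{2g} \cdot \Bigl[K^0 \times \prod_m Y_m^{r_m}\Bigr]^G.
\]

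It remains to show $[A] \cdot \bigl[K^0 \times \prod_m Y_m^{r_m}\bigr]^G = \prod_m [\Sym^{r_m}(X_m)]$. Write $H^1(\prod_m A^{r_m}) = V \otimes W$ with $V = H^1(A, \mathbb{Q})$ and $W = \bigoplus_m \mathbb{Q}^{r_m}$ the obvious permutation $G$-representation. Then $\tilde\phi_A^\ast \colon V \to V \otimes W$ is $v \mapsto v \otimes \ell$ with $\ell = \sum_m m \cdot \mathbf{1}_m \in W^G$. Semisimplicity of $\mathbb{Q}[G]$ provides a $G$-equivariant splitting $W = \mathbb{Q}\ell \oplus W'$ with $\mathbb{Q}\ell$ trivial, inducing a $G$-equivariant Künneth identification $H^\ast(\prod_m A^{r_m}) \cong H^\ast(A) \otimes H^\ast(K^0)$ of Hodge structures. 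Tensoring with $H^\ast(\prod_m Y_m^{r_m})$ and taking $G$-invariants then gives the required equality. The main technical obstacle is the triviality of the $G$-action on $\pi_0(K)$, which is what allows the factor $g(r)^{2g}$ to separate cleanly from the rest of the $G$-invariant Hodge-theoretic computation.
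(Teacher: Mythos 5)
Your argument is correct, but it follows a genuinely different route from the paper's. The paper first reduces the case $g(r)>1$ to $g(r)=1$ by rewriting the pullback over $0$ as a sum of pullbacks over the $g(r)^{2g}$ points of the $g(r)$-torsion subgroup of $A$, and then, for $g(r)=1$, runs the G\"ottsche--Soergel argument: the covering $A\times K^{(r)}\to X^{(r)}$ has deck group the $n$-torsion subgroup $A(n)$ with $n=\sum_m m r_m$, and the whole content is that $A(n)$ acts trivially on compactly supported cohomology --- a step the paper delegates to \cite{GS} after stripping off the fibers $Y_m$. You instead stay on the ordered cover by $G=\prod_m S_{r_m}$, identify $0^*(\cdots)$ with the class of $(K\times\prod_m Y_m^{r_m})/G$ where $K=\ker(\tilde\phi_A)$, and extract the factor $g(r)^{2g}$ as $|\pi_0(K)|$ computed by dualizing $\tilde\phi_A$ (the dual map has kernel $\hat A[g(r)]$); the factor $[A]$ is then peeled off by the $G$-equivariant splitting $W=\mathbb{Q}\ell\oplus W'$ and the resulting K\"unneth decomposition $H^*(\prod_m A^{r_m})\cong H^*(A)\otimes H^*(K^0)$. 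This treats both cases of $g(r)$ uniformly and is essentially self-contained: the only ``translations act trivially on cohomology'' input you need is for translations by elements of the \emph{connected} group $K^0$, which is automatic, whereas the paper relies on the harder torsion-point statement imported from \cite{GS}. What the paper's approach buys is brevity by reusing an existing result; what yours buys is a complete proof with the two key structural facts ($\pi_0(K)^\vee\cong\ker(\widehat{\tilde\phi_A})$, and triviality of the $G$-action on $\pi_0(K)$ via $gx-x\in\prod_m\ker(\Sigma_{r_m})\subseteq K^0$) made explicit --- both of which you verify correctly.
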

\begin{proof} It suffices to prove the case $g\geq 1$. The proof follows step $7$ in G\"{o}ttsche and Soergel's proof of Theorem 7 in \cite{GS}. Without loss of generality we may assume that $X_m = Y_m\times A$ as all the above classes are independent of the Zariski fibration.

Assume that $g(r)=1$ and let $X^{(r)} = \prod_{m=1}^l\textup{Sym}^{r_m}(X_m) $ with the map to $A$ given by projection to $A^{(r)}$ followed by the addition map scaling the $m$-th factor by $m$. Define $K^{(r)}$ to be the fiber of $X^{(r)}$ over zero. The natural quotient map $A\times K^{(r)} \to X^{(r)}$ has fiber given by the $n = \sum_{m}mr_m$ torsion points $A(n)$. The pre-quotient represents the Hodge structure on the left hand side above while the base represents the right hand side. It is enough to show that $A(n)$ acts trivially on the cohomology with compact support of $A\times K^{(r)}$. In fact since the quotient map $A\times K^{(r)} \to X^{(r)}$ is the equivariant  $S_{(r)}$ quotient of the map of cartesian products $A\times K^{r} \to X^{r}$, it is enough to show that $A(n)$ acts trivially on the cohomology with compact support of the space induced form the cartesian product $A\times K^{r}$. The factors $Y_m$ can now be removed and the proof is identical to \cite{GS}. 

Likewise in the $g(r)>1$ case we reduce to the $g(r)=1$ case. When $g(r)>1$ the pull back over zero is equal to 
\[  \sum_{x\in A(g(r))} x^* \left( \prod_{m=1}^l \left(\frac{m}{g(r)}\right)_*\left([\textup{Sym}^{r_m}(X_m)\to A] \right)\right) \]
where we reduce the multiplicity of all push forwards by a factor of $g(r)$ but allow the image to lie in the $g(r)$-torsion points of $A$ of which there are $g(r)^{2g}$. Each of these new terms now satisfies $g(r)=1$ and we have the result. 
\end{proof}

For later use we determine Hodge--Deligne polynomials of symmetric products. Recall that the Hodge--Deligne polynomial is a ring homomorphism
\[
E: K_0(\textup{Var}) \rightarrow \mathbb{Z}[x,y]
\]
defined on generators by 
\[
E \left( [X]; x,y \right) = \sum_{p,q}x^py^q \sum_{i} (-1)^ih^{p,q}(H_c^i(X, \mathbb{Q})).
\]
The coefficient of $x^py^q$ is denoted by $e^{p,q}$. If $X$ is a smooth projective variety, then clearly $e^{p.q}([X]) = (-1)^{p+q}h^{p.q}(X)$. For any motivic class $[X]\in K_0(\textup{Var}_\cC)$, we call $h^{p,q}\left([X]\right): = (-1)^{p+q}e^{p,q}\left([X]\right)$ the Hodge numbers of $[X]$. All Hodge numbers of symmetric products of smooth quasi-projective varieties can be computed via Macdonald's formula,
\begin{equation*}
\sum_{n=0}^{\infty}E([\textup{Sym}^n(X)];x,y)t^n = \prod_{p,q}\Big{(} \frac{1}{1-x^py^qt}\Big{)}^{e^{p,q}([X])},
\end{equation*}
equivalently,
\begin{equation}\label{Macdonald}
E\left( \left[\textup{Sym}^n(X); x,y\right] \right) = \sum_{\alpha \vdash n} \prod_{j} \frac{1}{j^{b_j(\alpha)}\cdot b_j(\alpha)!} E\left([X]; x^j, y^j  \right)^{b_j(\alpha)}
\end{equation}
where $b_{m}(\alpha)$ stands for the number of parts in the partition $\alpha$ of size $m$, see \cite[Proposition 1.1]{Ch}. Hence we obtain the following lemma for the $\chi_y$ genus.

\begin{lemma}\label{Kummerchiy}
Let $X\to A$ be a Zariski locally trivial fibration over a $g$-dimensional complex abelian variety with fiber $Y$, then we have
\[
\chi_{-y}\left( \frac{[\textup{Sym}^n(X)]}{[A]}\right) := \left(\frac{E([\textup{Sym}^n(X)]; x,y)}{E([A] ; x,y)}\right) \Big{|}_{x=1} = \chi_{-y^n}(Y) \cdot n^{g-1} \left( \sum_{i=0}^{n-1}y^i  \right)^g.\]
\end{lemma}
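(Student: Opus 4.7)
\medskip

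The plan is to combine Macdonald's formula with the multiplicativity of the Hodge--Deligne polynomial and then track which partitions survive in the limit $x\to 1$.

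First, since $X\to A$ is Zariski locally trivial, $[X]=[A]\cdot[Y]$ in $K_0(\textup{Var})$, so
\[
E([X];x,y)=E([A];x,y)\cdot E([Y];x,y)=(1-x)^g(1-y)^g\cdot E([Y];x,y),
\]
using the fact that an abelian variety is smooth projective with $h^{p,q}(A)=\binom{g}{p}\binom{g}{q}$, which yields $E([A];x,y)=(1-x)^g(1-y)^g$.

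Next I would plug this factorization into Macdonald's formula (\ref{Macdonald}):
\[
E([\textup{Sym}^n(X)];x,y)=\sum_{\alpha\vdash n}\prod_j\frac{1}{j^{b_j(\alpha)}b_j(\alpha)!}\bigl((1-x^j)^g(1-y^j)^g E([Y];x^j,y^j)\bigr)^{b_j(\alpha)}.
\]
Dividing by $E([A];x,y)=(1-x)^g(1-y)^g$ and writing $(1-x^j)^g=(1-x)^g(1+x+\cdots+x^{j-1})^g$, one sees that the contribution of a partition $\alpha$ carries an overall factor of $(1-x)^{g(\ell(\alpha)-1)}$ times a polynomial in $x$, where $\ell(\alpha)=\sum_j b_j(\alpha)$ is the number of parts.

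Setting $x=1$, every partition with $\ell(\alpha)\geq 2$ contributes zero (this is the key cancellation, and the only real step that requires care: the quotient $E([\textup{Sym}^n(X)])/E([A])$ has to be treated as a polynomial identity in $K_0(\textup{HS})[x,y]$ before specializing, because both numerator and denominator vanish at $x=1$). The only surviving partition is the one-part partition $\alpha=(n)$, whose contribution is
\[
\frac{1}{n}\cdot\frac{(1-x^n)^g}{(1-x)^g}\cdot\frac{(1-y^n)^g}{(1-y)^g}\cdot E([Y];x^n,y^n).
\]

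Finally, specializing $x=1$ gives $(1+1+\cdots+1)^g=n^g$ from the first factor, leaving
\[
\frac{1}{n}\cdot n^g\cdot\Bigl(\sum_{i=0}^{n-1}y^i\Bigr)^g\cdot E([Y];1,y^n)=n^{g-1}\cdot\Bigl(\sum_{i=0}^{n-1}y^i\Bigr)^g\cdot\chi_{-y^n}(Y),
\]
which is precisely the claimed identity. The hardest step is the bookkeeping of $(1-x)$-factors and justifying the specialization of the ratio at $x=1$; everything else is a routine application of Macdonald's formula.
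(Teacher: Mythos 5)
Your proof is correct and follows essentially the same route as the paper's: apply Macdonald's formula, observe that only the one-part partition $\alpha=(n)$ survives the division by $E([A];x,y)$ and specialization at $x=1$, and compute the remaining term via the factorization $E([X])=E([A])E([Y])$. The only difference is that you spell out the $(1-x)^{g(\ell(\alpha)-1)}$ bookkeeping that justifies the vanishing of multi-part partitions (valid since $g\geq 1$, which is implicit here as elsewhere in the paper), a step the paper simply asserts.
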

\begin{proof}
By the formula (\ref{Macdonald}), only the partition of the form $\alpha = (n^1)$ contributes to the left-hand side. Thus we get
\begin{equation*}
\begin{split}
\chi_{-y}\left( \frac{[\textup{Sym}^n(X)]}{[A]}\right)   = & \left( \frac{1}{n} \cdot \frac{E\left([X]; x^n, y^n  \right)}{E([A]; x,y)}\right)\Big{|}_{x=1} \\
  = & \left( \frac{E([Y] ; 1,y^n)}{n}\right) \cdot \left( \frac{E\left([A]; x^n, y^n  \right)}{E([A]; x,y)}\right)\Big{|}_{x=1}\\
  = & \chi_{-y^n}(Y) \cdot n^{g-1} (1+y+ \cdots +y^{n-1})^g. \qedhere
  \end{split}
 \end{equation*}
\end{proof}

\subsection{Stacks.} This subsection is only needed in the final application of the paper and may be skipped. 

We develop the power structure for stacks over $\mathfrak{M}$ with affine stabilizers. The ring $K_0(\textup{St}/\mathfrak{M})$ is generated by all such stacks $[X\to \mathfrak{M}]$, with the cut and paste relations, together with the multiplication defined above. In addition the ring $K_0(\textup{St}/\mathfrak{M})$ has the relation 
\[ [E\to \mathfrak{M}]  =  \mathbb{L}^r \cdot [X\to \mathfrak{M}]  \]
when $\pi : E\to X$ is a vector bundle of rank $r$, and $\mathbb{L} = [\mathbb{A}^1\to 0 \in \mathfrak{M}]$ as usual.

\begin{theorem}[cf. \cite{E}]\label{stack} The ring $K_0(\textup{St}/\mathfrak{M})$ is isomorphic to 
\[  K_0(\textup{Var}/\mathfrak{M})[[\GL_n(\cC)\to 0\in \mathfrak{M}]^{-1} : n\geq 1]. \]
\end{theorem}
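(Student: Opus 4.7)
The strategy is to mimic Ekedahl's argument \cite{E} in the absolute case and carry the structure map to $\mathfrak{M}$ along faithfully. The monoid enters only through the product on $\mathfrak{M}$, which is compatible with every geometric operation below; once the atlas and stratification steps are performed on the source stack, the composition with $X \to \mathfrak{M}$ pulls back unchanged to each stratum.

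First I would construct the forward map. The inclusion of varieties into stacks gives a ring map $K_0(\textup{Var}/\mathfrak{M}) \to K_0(\textup{St}/\mathfrak{M})$. To see that $[\GL_n \to 0]$ is invertible in the target, consider the classifying stack $B\GL_n \to 0 \in \mathfrak{M}$. The atlas $\textup{Spec}(\cC) \to B\GL_n$ is a $\GL_n$-torsor, and more generally any $\GL_n$-torsor of varieties is Zariski-locally trivial by Hilbert's Theorem~90; combined with the vector bundle relation in $K_0(\textup{St}/\mathfrak{M})$ this yields $[\textup{Spec}(\cC) \to 0] = [\GL_n \to 0] \cdot [B\GL_n \to 0]$. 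Hence $[B\GL_n]$ inverts $[\GL_n]$, and the universal property of localization extends the inclusion to a ring homomorphism $\Phi$ from $K_0(\textup{Var}/\mathfrak{M})[[\GL_n \to 0]^{-1}]$.

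For the inverse map $\Psi$, I would invoke the relative version of Ekedahl/Kresch's stratification: every stack $X$ of finite type over $\mathfrak{M}$ with affine stabilizers decomposes as $X = \bigsqcup_i [Y_i/\GL_{n_i}]$ into locally closed substacks, where each $Y_i$ is a quasi-projective variety carrying a free $\GL_{n_i}$-action and an invariant map to $\mathfrak{M}$ (obtained by applying the absolute statement and pulling back the structure map). I then set
\[
\Psi([X \to \mathfrak{M}]) := \sum_i [Y_i \to \mathfrak{M}] \cdot [\GL_{n_i} \to 0]^{-1}.
\]
Independence from the chosen stratification, and from alternative presentations $[Y_i/\GL_{n_i}] \cong [Y'_i/\GL_{n'_i}]$ of the same stratum, reduces after passing to common refinements and to the obvious fiber-product comparison to the basic identity $[P \to \mathfrak{M}] = [\GL_n \to 0] \cdot [X \to \mathfrak{M}]$ for any $\GL_n$-torsor $P \to X$ of varieties over $\mathfrak{M}$, which is again Hilbert~90.

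The main technical obstacle is establishing the global-quotient stratification in the relative setting; however, because the structure map to $\mathfrak{M}$ plays no role in constructing the stratification, this reduces cleanly to the absolute statement. Once $\Psi$ is well-defined, checking $\Phi \circ \Psi = \mathrm{id}$ and $\Psi \circ \Phi = \mathrm{id}$ is routine: on variety classes $[Y \to \mathfrak{M}]$ both compositions act as the identity, while on $[\GL_n \to 0]^{-1}$ and $[B\GL_n \to 0]$ they match by the invertibility relation established in the first step.
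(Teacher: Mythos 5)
Your proposal is correct and follows essentially the same route as the paper: the torsor relation $[P \to \mathfrak{M}] = [\GL_n \to 0]\cdot[X \to \mathfrak{M}]$ established via the vector bundle relation (Hilbert 90), the fiber-product comparison of two presentations for well-definedness, and Kresch's stratification of stacks with affine stabilizers by global quotients. The only difference is presentational — you build both directions $\Phi$ and $\Psi$ explicitly and verify they are mutually inverse, where the paper writes down the single map and checks it is well defined — so there is nothing substantive to add.
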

\begin{proof} For any global quotient stack $[[X/\GL_n(\cC)] \to \mathfrak{M} ]$ we have the relation 
\[[X \to \mathfrak{M} ] = [\GL_n(\cC) \to 0\in \mathfrak{M}] \cdot [[X/\GL_n(\cC)] \to \mathfrak{M} ].\] 
This is quickly proven using the new relation for vector bundles. If a stack $\mathcal{X}$ over $\mathfrak{M}$ can be represented in two ways, say as $[X/\GL_n(\cC)]$ and $[X'/\GL_{n'}(\cC)]$, then by taking the fiber product we get 
\[  \xymatrix{ X'' \ar[r]  \ar[d]& X' \ar[d]  \\ X \ar[r] & \mathcal{X}  \\ } \]
so $X''$ is a $\GL_{n'}(\cC)$ torsor over $X$ and a $\GL_{n}(\cC)$ torsor over $X'$. We have
\[ [\GL_{n'}(\cC) \to 0\in \mathfrak{M}] \cdot [X \to \mathfrak{M}] =[\GL_{n}(\cC) \to 0\in \mathfrak{M}] \cdot [X' \to \mathfrak{M}] .\]
As shown in \cite{K} every stack with affine stabilizers can be stratified by global quotients of the form $[X/\GL_n(\cC)]$. So the above relations give a well defined isomorphism sending the stack $[X/\GL_n(\cC)] \to \mathfrak{M} ]$ to $[X \to \mathfrak{M} ]\cdot [\GL_n(\cC)\to 0\in \mathfrak{M}]^{-1}$.
\end{proof}

This shows that the ring $K_0(\textup{ST}/\mathfrak{M})$ can be written as the middle term in a sequence of localized rings
\[ K_0(\textup{Var}/\mathfrak{M}) \to K_0(\textup{Var}/\mathfrak{M})[\mathbb{L}^{-1},(1-\mathbb{L}^{-n})^{-1}]\to K_0(\textup{Var}/\mathfrak{M})[[\mathbb{L}^{-1}]] \]
here the final ring is the completion with respect to the dimension filtration \cite{BD}.

\begin{theorem}[cf. \cite{E1}]\label{stackps} The power structure on $K_0(\textup{Var}/\mathfrak{M})[\mathbb{L}^{-1}]$ extends uniquely to one over $K_0(\textup{St}/\mathfrak{M})$ and $K_0(\textup{Var}/\mathfrak{M})[[\mathbb{L}^{-1}]]$.
\end{theorem}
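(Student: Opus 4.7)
My plan is to mimic the strategy of Theorem \ref{localps}, extending the power structure ring-by-ring using the unique factorization $A(t) = \prod_{n\geq 1} \zeta_{[A_n]}(t^n)$ of a series in $1 + tR\pser{t}$, combined with Lemma \ref{zeta} and Lemma \ref{npow}. By Theorem \ref{stack}, passing from $K_0(\textup{Var}/\mathfrak{M})[\mathbb{L}^{-1}]$ to $K_0(\textup{St}/\mathfrak{M})$ corresponds to inverting all the classes $[\GL_n(\cC) \to 0]$, and since $[\GL_n(\cC)] = \mathbb{L}^{n(n-1)/2}\prod_{k=1}^n(\mathbb{L}^k-1)$ this in turn amounts to inverting the elements $\mathbb{L}^k - 1$ for $k \geq 1$.

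For any fixed $k\geq 1$, the operation $B(t)\mapsto B(t)^{\mathbb{L}^k-1}$ on $1+tR\pser{t}$ is explicitly given by $B(t)\mapsto B(\mathbb{L}^k t)/B(t)$ thanks to Lemma \ref{zeta} and axiom (4). Solving this equation recursively for $B(t)$ given its image involves only divisions by powers of $\mathbb{L}^k-1$, all of which are available in $K_0(\textup{St}/\mathfrak{M})$. I would therefore define $A(t)^{(\mathbb{L}^k-1)^{-1}[M]}$ to be the unique series in $1+tK_0(\textup{St}/\mathfrak{M})\pser{t}$ whose $(\mathbb{L}^k-1)$-th power equals $A(t)^{[M]}$; this is forced by axiom (5). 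Iterating these inversions, together with Lemma \ref{npow} and the $\zeta$-series factorization, yields the extension to all of $K_0(\textup{St}/\mathfrak{M})$, and the axioms transport from the corresponding identities in $K_0(\textup{Var}/\mathfrak{M})[\mathbb{L}^{-1}]$. For the completion $K_0(\textup{Var}/\mathfrak{M})[[\mathbb{L}^{-1}]]$ the same procedure works, since $(1-\mathbb{L}^{-k})^{-1}=\sum_{j\geq 0}\mathbb{L}^{-jk}$ already converges in the dimension filtration; alternatively one extends by continuity from the intermediate ring $K_0(\textup{Var}/\mathfrak{M})[\mathbb{L}^{-1},(1-\mathbb{L}^{-n})^{-1} : n\geq 1]$ appearing in the diagram after Theorem \ref{stack}.

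The main obstacle I foresee is well-definedness at the stack level: two presentations $[X/\GL_n]$ and $[X'/\GL_{n'}]$ of the same stack $\lM$ produce the identity $[\GL_{n'}]\cdot[X]=[\GL_n]\cdot[X']$ in $K_0(\textup{Var}/\mathfrak{M})$ via the fiber product argument in the proof of Theorem \ref{stack}, and I must verify that the two ensuing definitions of $A(t)^{[\lM]}$ agree. This reduces to applying axioms (3)--(5) to the above identity, which is routine but requires careful bookkeeping of the various $(\mathbb{L}^k-1)$-inversions. Uniqueness of the extension is then immediate: axioms (3) and (4), the unique $\zeta$-series factorization, and the extended Lemma \ref{zeta} together force the value of $A(t)^\alpha$ for every exponent $\alpha$ in either target ring.
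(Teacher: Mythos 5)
Your overall architecture is close to the paper's: both arguments run through the unique factorization $A(t)=\prod_{k\geq1}\zeta_{[A_k]}(t^k)$ together with the rule $\bigl(\zeta_{[A]}(t^n)\bigr)^{[B]}=\zeta_{n_*([B])[A]}(t^n)$, and both reduce the problem to making sense of zeta functions with the newly inverted classes in the exponent. The paper does this by expanding $\prod_{i=0}^{n-1}(1-\mathbb{L}^{-i})^{-1}$ as a series $\sum_{k}l_k\mathbb{L}^{-k}$ and setting $\zeta_{[A]/[\GL_n]}(t)=\prod_{k\geq0}\zeta_{[A]\mathbb{L}^{-n^2-k}}(t)^{l_k}$, a product whose coefficients converge in the dimension filtration; this treats $K_0(\textup{St}/\mathfrak{M})$ and the completion uniformly. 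You instead propose to invert the operation $B\mapsto B^{\mathbb{L}^k-1}$ directly.

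The gap is that your explicit formula $B(t)^{\mathbb{L}^k-1}=B(\mathbb{L}^kt)/B(t)$ is false for general $B(t)$, and it is the load-bearing step of your construction. Lemma \ref{zeta} gives $\zeta_{[A]}(t)^{\mathbb{L}^k}=\zeta_{[A]}(\mathbb{L}^kt)$ only for a zeta series in the variable $t$ itself; for $B(t)=\prod_{n\geq1}\zeta_{[B_n]}(t^n)$, Lemma \ref{npow} and $n_*\mathbb{L}^k=\mathbb{L}^k$ give $B(t)^{\mathbb{L}^k}=\prod_{n}\zeta_{[B_n]}(\mathbb{L}^k t^n)$, which differs from $B(\mathbb{L}^kt)=\prod_{n}\zeta_{[B_n]}(\mathbb{L}^{nk}t^n)$ as soon as some $[B_n]$ with $n\geq2$ is nonzero. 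Concretely, $B(t)=\zeta_{[\mathrm{pt}]}(t^2)=(1-t^2)^{-1}$ satisfies $B(t)^{\mathbb{L}}=(1-\mathbb{L}t^2)^{-1}$, not $B(\mathbb{L}t)=(1-\mathbb{L}^2t^2)^{-1}$. (A further symptom: solving $B(\mathbb{L}^kt)=C(t)B(t)$ degree by degree requires dividing by $\mathbb{L}^{mk}-1$, not by powers of $\mathbb{L}^k-1$.) The repair is to invert at the level of factorization data, where $B\mapsto B^{\mathbb{L}^k-1}$ acts on the $[B_n]$ by multiplication by $\mathbb{L}^k-1$ and is therefore bijective precisely when that class is invertible; equivalently, the correct equation $B(t)^{\mathbb{L}^k}=C(t)\cdot B(t)$ has $t^m$-coefficient equal to $\mathbb{L}^kb_m$ plus terms in $b_1,\dots,b_{m-1}$ on the left, so it is solvable dividing only by $\mathbb{L}^k-1$. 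Either repair essentially lands you on the paper's argument. Note also that for the completion $K_0(\textup{Var}/\mathfrak{M})[[\mathbb{L}^{-1}]]$ you need more than the convergence of $(1-\mathbb{L}^{-k})^{-1}$: exponents there are arbitrary limits in the dimension filtration, so you must verify coefficientwise convergence of the resulting infinite products of zeta functions, which is exactly what the paper's expansion accomplishes.
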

\begin{proof} Given an element $[A]/[\GL_n] \in K_0(\textup{St}/\mathfrak{M})$ we define its zeta function as
\begin{eqnarray*}
\zeta_{[A]/[\GL_n]}(t) &=& (1+t+t^2+\cdots)^{[A]\mathbb{L}^{-n^2}\prod_{i=0}^{n-1} (1-\mathbb{L}^{-i})^{-1} } \\
 & = & (1+t+t^2+\cdots)^{[A]\mathbb{L}^{-n^2}\sum_{k\geq 0} l_k\mathbb{L}^{-k} } \\
 & = & \prod_{k\geq 0}\left( \zeta_{[A]\mathbb{L}^{-n^2-k}}(t)\right)^{l_k}. \\
\end{eqnarray*}
In particular we see that $\zeta_{[A]/[\GL_n]}(t) = 1 + ([A]/[\GL_n]) t + O(t^2)$ from which it follows that any series $A(t)\in 1 + K_0(\textup{St} / \mathfrak{M})$ can be factored as
\[ A(t) = \prod_{k\geq 1} \zeta_{[A_k]}(t^k) \textup{ where } [A_k] \in K_0(\textup{St}/ \mathfrak{M})\]
using the above definition of the zeta function it is also easy to check 
\[ \left( \zeta_{[A]}(t^n)\right)^{[B]} = \zeta_{n_*([B])[A]}(t^n) \textup{ for all } [A],[B]\in K_0(\textup{St}/\mathfrak{M}) \]
this determines the power structure uniquely. 
\end{proof}

The following lemma shows that the geometric essence of the power structure continues to hold in a limited sense.

\begin{lemma}[cf. \cite{BM} Lemma 6] \label{geomstack}
Let $M\to \mathfrak{M}$ be a variety and $A_n\to \mathfrak{M}$ be stacks and define $A(t) = \sum_{n\geq 0}[A_n\to \mathfrak{M}]t^n$. Then we have 
\[ A(t)^{[M\to \mathfrak{M}]} = \sum_{\alpha} [B_{\alpha} \to \mathfrak{M}]t^{|\alpha|} \]
where $|\alpha|$ is the size of partition $\alpha$ and $B_\alpha $ is defined geometrically as above.
\end{lemma}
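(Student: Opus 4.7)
The plan is to reduce the claim to the known variety case through the multiplicative structure of the power structure. By the unique factorization $A(t) = \prod_{k\geq 1} \zeta_{[A_k]}(t^k)$ in $K_0(\textup{St}/\mathfrak{M})\pser{t}$ established in the proof of Theorem \ref{stackps}, axiom $(3)$ gives
\[ A(t)^{[M]} = \prod_{k\geq 1} \zeta_{[A_k]}(t^k)^{[M]}. \]
On the geometric side, a partition $\alpha \vdash n$ decomposes canonically into sub-partitions $\alpha = \sqcup_k \alpha^{(k)}$ indexed by the source $k$, and this bijection identifies $\sum_\alpha [B_\alpha]t^{|\alpha|}$ for the full series $A(t)$ with the product over $k$ of the analogous sums for each factor $\zeta_{[A_k]}(t^k)$. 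Combined with the stack version of Lemma \ref{npow} (whose proof extends verbatim), which gives $\zeta_{[A]}(t^k)^{[M]} = \zeta_{[A]}(t)^{k_*[M]}\big|_{t\mapsto t^k}$, this reduces the claim to the base case $A(t) = \zeta_{[A]}(t)$ for a single stack class $[A]$ and an arbitrary variety exponent $[N]$.

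For this base case, I would write $[A] = [A^{\mathrm{var}}]/[\GL_r]$ as a global quotient via Theorem \ref{stack}, and expand
\[ \zeta_{[A]}(t) = \prod_{k\geq 0} \zeta_{[A^{\mathrm{var}}]\mathbb{L}^{-r^2-k}}(t)^{l_k} \]
as in the proof of Theorem \ref{stackps}, where the coefficients $l_k$ come from the series $\prod_{i=0}^{r-1}(1-x)^{-1} = \sum_k l_k x^k$. Axioms $(3)$ and $(5)$ together with Lemma \ref{zeta} then reduce $\zeta_{[A]}(t)^{[N]}$ to a product of variety-case zeta function powers $\zeta_{[A^{\mathrm{var}}]}(\mathbb{L}^{-r^2-k}t)^{l_k[N]}$, to which the original geometric construction of the power structure on $S_0(\textup{Var}/\mathfrak{M})$ applies directly. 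Reassembling this product and reindexing by the partition data $\alpha$ should reproduce $[B_\alpha]$ in $K_0(\textup{St}/\mathfrak{M})$, with the stack factor $\prod_i (\Sym^i A)^{b_i(\alpha)}$ in $B_\alpha$ arising from the appropriate combinatorial regrouping of the variety-case configurations.

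The main obstacle is the combinatorial bookkeeping in this final reassembly: matching the algebraic expansion of $\zeta_{[A]}(t)^{[N]}$ as a product of variety-case zeta functions with the single geometric formula $\sum_\alpha [B_\alpha]t^{|\alpha|}$ in the stack setting. The key geometric input that makes this work is that the $S_\alpha$-action on the variety factor $\prod_i M^{b_i(\alpha)}\setminus\Delta$ is free, so the fibered product $\times_{S_\alpha}$ in $B_\alpha$ is a well-behaved stack quotient and commutes with the $\GL_r$-quotient structure on the stack factor $\prod_i (\Sym^i A)^{b_i(\alpha)}$. This freeness is what ensures that the geometric identifications of the variety case transfer intact to the stack case.
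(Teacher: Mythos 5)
Your overall instinct---expand $[\GL_r]^{-1}$ as a series in $\mathbb{L}^{-1}$ with nonnegative integer coefficients $l_k$, reduce to the variety case, and control the resulting $\mathbb{L}$-twists with Lemma \ref{lef}---is the right one, and it is essentially what the proof in \cite{BM} (which the paper simply imports) does. But as written your argument has two genuine gaps. First, the reduction through the factorization $A(t)=\prod_{k\geq 1}\zeta_{[A_k]}(t^k)$ does not work as you describe: the classes $[A_k]$ appearing in that factorization are not the coefficients of the series but virtual combinations of them (already $[A_2]^{\mathrm{fact}}=[A_2]-[\Sym^2 A_1]$), so they are in general not effective, and the ``geometric side'' of each factor $\zeta_{[A_k]}(t^k)$---a configuration space $B_\alpha$ built from $A_k$---is simply not defined. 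The claimed canonical decomposition $\alpha=\sqcup_k\alpha^{(k)}$ therefore has nothing to match against; multiplicativity of the geometric expression $\sum_\alpha[B_\alpha]t^{|\alpha|}$ in the series is a statement about effective coefficients and cannot be invoked for the factorization classes.

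Second, and more importantly, the step you yourself flag as ``the main obstacle''---reassembling the product of variety-case zeta powers into the single geometric class $[B_\alpha]$ with stack labels $\prod_i A_i^{b_i(\alpha)}$---is not bookkeeping: it is the entire content of the lemma. Note also that your expansion $\prod_{k\ge 0}\zeta_{[A^{\mathrm{var}}]\mathbb{L}^{-r^2-k}}(t)^{l_k}$ is an infinite product converging a priori only in the completion $K_0(\textup{Var}/\mathfrak{M})\pser{\mathbb{L}^{-1}}$, so even after a successful reassembly you would have an identity there rather than in $K_0(\textup{St}/\mathfrak{M})$. The paper's proof (following \cite[Lemma 6]{BM}) handles both issues at once with a double filtration, by degree in $t$ and by dimension (powers of $\mathbb{L}^{-1}$): modulo each step of the dimension filtration both sides are represented by finitely many honest variety classes, the geometric class $B_\alpha$ with stack labels decomposes accordingly into $\mathbb{L}$-twisted variety classes via the stack version of Lemma \ref{lef} (which is exactly why the paper singles out that lemma), and the identity at each finite truncation is the already-established variety case. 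Your observation that the $S_\alpha$-action on $\prod_i M^{b_i(\alpha)}\setminus\Delta$ is free is correct and relevant, but it does not substitute for this argument.
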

\begin{proof} We need the result of Lemma \ref{lef} which carries over for stacks. Then the proof follows exactly the same lines as \cite{BM} using a double filtration on the ring of polynomials. 
\end{proof}





\section{Generalized Kummer schemes}
To begin consider a general abelian geometry $X\to A$ with $r+g = d$ and let $\textup{Hilb}^n(X) \to A$ be the Hilbert scheme of $n$ points on $X$. We arrange their motivic classes into a generating series $Z_X(t) = \sum_{n \geq 0} [\textup{Hilb}^n(X) \to A]t^n$. Then by the geometric definition of the power structure (cf. \cite{GZLMH2}) we have
\[  Z_X(t) =  \left( \sum_{n\geq 0} \left[\textup{Hilb}^n\left(\mathbb{A}_{0}^{d}\right)\to 0\right] t^n \right)^{[X\to A]}\]
and we hereby denote as $H_d(t)$ the sum inside the above bracket containing the punctual Hilbert schemes of points on $\mathbb{A}^d$.

\subsection{Surfaces.} Now we consider a surface $X\to A$ so that $r+g=2$. In this case Ellingsrud and Str{\o}mme \cite{ES} give a cell decomposition of the punctual Hilbert schemes so that 
\[ H_2(t) =  \prod_{k \geq 1}(1- \cL^{k-1}t^k)^{-1}. \]
Expanding this series gives a relative version of G\"{o}ttsche's result \cite[Theorem 1.1]{G}.
\begin{proposition}\label{relG} Let $X\to A$ be a surface then in $K_0(\textup{Var}/ A)$ we have
\[ [\textup{Hilb}^n(X)\to A] = \mathbb{L}^{n} \sum_{\alpha \vdash n } \prod_{m\geq 1} \mathbb{L}^{-b_{m}(\alpha)} m_* \left( [\textup{Sym}^{b_m(\alpha)}(X)\to A] \right) \]
where $b_{m}(\alpha)$ is the number of parts in $\alpha$ of size $m$.
\end{proposition}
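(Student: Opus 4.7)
The plan is to start from the identity $Z_X(t)=H_2(t)^{[X\to A]}$ (the general relative power-structure formula quoted just before the statement) together with Ellingsrud--Str{\o}mme's factorisation, which in the zeta-function notation reads
\[
H_2(t)\;=\;\prod_{k\ge 1}\zeta_{[\mathrm{pt}]}\!\bigl(\mathbb{L}^{k-1}t^k\bigr),
\]
where $[\mathrm{pt}]$ stands for $[\operatorname{Spec}\cC\to 0\in A]$, the multiplicative identity of $K_0(\textup{Var}/A)$. Axiom (3) of the power structure then distributes the exponent over the product, giving
\[
Z_X(t)\;=\;\prod_{k\ge 1}\zeta_{[\mathrm{pt}]}\!\bigl(\mathbb{L}^{k-1}t^k\bigr)^{[X\to A]}.
\]

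The next step is to simplify each factor using the formal machinery of Section~2. By Theorem~\ref{localps} the base equals $\zeta_{\mathbb{L}^{k-1}[\mathrm{pt}]}(t^k)$; by Lemma~\ref{npow} we can pull the $t\mapsto t^k$ substitution out of the exponent,
\[
\zeta_{\mathbb{L}^{k-1}[\mathrm{pt}]}(t^k)^{[X\to A]}\;=\;\zeta_{\mathbb{L}^{k-1}[\mathrm{pt}]}(t)^{k_*[X\to A]}\Big|_{t\mapsto t^k};
\]
then the definitional identity $\zeta_{[A]}(t)^{[B]}=\zeta_{[B]\cdot[A]}(t)$ together with one more application of Theorem~\ref{localps} reduces this factor to
\[
\zeta_{k_*[X\to A]}\!\bigl(\mathbb{L}^{k-1}t^k\bigr)\;=\;\sum_{b\ge 0}\mathbb{L}^{(k-1)b}\cdot k_*\bigl[\textup{Sym}^b(X)\to A\bigr]\,t^{kb}.
\]

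Finally, I would multiply these factors over all $k\ge 1$ and read off the $t^n$-coefficient of $Z_X(t)$: it is indexed by sequences $(b_k)_{k\ge 1}$ of nonnegative integers with $\sum_k kb_k=n$, equivalently by partitions $\alpha\vdash n$ via $b_k=b_k(\alpha)$. Since $\sum_{k\ge 1}(k-1)b_k(\alpha)=n-\sum_k b_k(\alpha)$, the total Lefschetz exponent assembles into $\mathbb{L}^n\prod_m\mathbb{L}^{-b_m(\alpha)}$, and the product of relative classes $\prod_m k_*[\textup{Sym}^{b_m(\alpha)}(X)\to A]$ is exactly the product that appears in the stated formula (the product in $K_0(\textup{Var}/A)$ being defined via addition in $A$, so that each $m_*$ correctly records the multiplicity of the length-$m$ parts). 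This yields Proposition~\ref{relG} term by term. No step looks like a genuine obstacle; the only point that requires care is tracking the two sources of Lefschetz twists (the $\mathbb{L}^{k-1}$ in Ellingsrud--Str{\o}mme's factorisation and the one introduced by the substitution $t\mapsto t^k$) and checking that the $k_*$ operation commutes with the extension of the power structure provided in Theorem~\ref{localps} as asserted.
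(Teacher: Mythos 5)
Your proposal is correct and follows essentially the same route as the paper: the paper's proof is the one-line instruction to expand $H_2(t)^{[X\to A]}$ using Lemma~\ref{lef} and Lemma~\ref{npow}, which is exactly the computation you carry out (your appeal to Theorem~\ref{localps} and the zeta-function identities is interchangeable with the paper's citation of Lemma~\ref{lef}). The bookkeeping of the Lefschetz twists via $\sum_m (m-1)b_m(\alpha)=n-\sum_m b_m(\alpha)$ and the identification of $k_*$ with the multiplicity-$k$ addition map are both handled correctly.
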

\begin{proof} Using Lemma \ref{lef} and Lemma \ref{npow} this follows immediately on expanding the series for $H_2(t)^{[X\to A]}$.\end{proof}
Pulling back over the origin then gives a generalized version of \cite[Theorem 6]{GS} as follows, which proves G\"ottsche's conjecture for geometrically ruled surfaces. See  \cite[Conjecture 2.4.17]{Lecture}.

\begin{theorem}\label{surfkummer} Let $X\to A$ be a surface over a $g$-dimensional complex abelian variety and $g(\alpha) = \textup{gcd}(\{ i : b_i(\alpha) \neq 0 \})$. Then in $K_0(\textup{HS})$ we have
\[ [A]\cdot [K_n(X)] = \mathbb{L}^{n} \cdot \sum_{\alpha \vdash n } g(\alpha)^{2g}  \prod_{m\geq 1} \mathbb{L}^{-b_{m}(\alpha)} \left( [\textup{Sym}^{b_m(\alpha)}(X)] \right) . \]
\end{theorem}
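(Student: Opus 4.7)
The plan is to deduce this result directly from the relative formula of Proposition \ref{relG} by pulling back over the origin $0\in A$ and then applying Lemma \ref{symkummer} partition-by-partition. Starting from
\[ [\textup{Hilb}^n(X)\to A] = \mathbb{L}^{n} \sum_{\alpha \vdash n } \prod_{m\geq 1} \mathbb{L}^{-b_{m}(\alpha)}\, m_* \left( [\textup{Sym}^{b_m(\alpha)}(X)\to A] \right) \]
in $K_0(\textup{Var}/A)[\mathbb{L}^{-1}]$, I first apply the homomorphism $0^{*}: K_0(\textup{Var}/A)[\mathbb{L}^{-1}] \to K_0(\textup{Var})[\mathbb{L}^{-1}]$. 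The left-hand side becomes $[K_n(X)]$, since by definition $K_n(X) = \pi_n^{-1}(0_A)$. The factor $\mathbb{L}^n$ and the factors $\mathbb{L}^{-b_m(\alpha)}$ are supported over $0 \in A$ and so commute with both $0^{*}$ and with multiplication by $[A]$, so I may move them outside the summand without affecting anything else.

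Next I multiply through by $[A]$ and pass to the ring of Hodge structures $K_0(\textup{HS})$ via the Hodge realization. For each partition $\alpha \vdash n$ what remains inside the summand is exactly
\[ [A]\cdot 0^{*}\left(\prod_{m \geq 1} m_*\,[\textup{Sym}^{b_m(\alpha)}(X)\to A]\right), \]
which is the object computed by Lemma \ref{symkummer} with $X_m = X$ and $r_m = b_m(\alpha)$ (and hence $g(r) = g(\alpha)$). That lemma identifies the displayed expression with $g(\alpha)^{2g}\prod_{m\geq 1}[\textup{Sym}^{b_m(\alpha)}(X)]$ in $K_0(\textup{HS})$. Reassembling the scalar factors yields precisely the claimed formula.

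There is no real obstacle: Proposition \ref{relG} does all the combinatorial work and Lemma \ref{symkummer} does all the abelian-variety work. The only point requiring care is that the final identity can only be stated in $K_0(\textup{HS})$ rather than in $K_0(\textup{Var})$, because Lemma \ref{symkummer} is itself only available at the level of Hodge structures; apart from this, the proof is pure bookkeeping.
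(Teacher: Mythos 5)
Your proof is correct and is exactly the paper's argument: the authors' own proof of Theorem \ref{surfkummer} is the one-line ``Follows immediately from Proposition \ref{relG} and Lemma \ref{symkummer},'' and your write-up simply spells out that same pullback-over-the-origin bookkeeping, including the correct observation that the final identity lives only in $K_0(\textup{HS})$ because Lemma \ref{symkummer} does.
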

\begin{proof} Follows immediately from Proposition \ref{relG} and Lemma \ref{symkummer}.\end{proof}

We compute the $\chi_y$ genus and the Euler characteristic by specializing the formula above as in \cite{Lecture}.
\begin{corollary} Let $X\to A$ be a surface over a $g$-dimensional abelian variety with fiber $Y$ and $g\geq 1$ then
\[\chi_{-y}(K_n(X))= y^n\cdot n^{g-1}\cdot \sum_{d\cdot m = n} y^{-m} \chi_{-y^{m}}(Y)\cdot d^{g+1} \left( \sum_{i=0}^{m-1} y^i \right) ^g.\]
In particular, the Euler characteristic
\[ \chi\left( K_n(X) \right) = \chi(Y) \cdot n^{2g-1}\cdot \sum_{d| n }   d. \]
\end{corollary}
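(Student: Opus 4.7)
The plan is to apply the Hodge--Deligne ring homomorphism $E(\cdot;x,y)$ to the identity of Theorem \ref{surfkummer}, divide both sides by $E([A];x,y) = ((1-x)(1-y))^g$, and specialize $x = 1$. Since $E(\mathbb{L}) = xy$, this yields
\[
\chi_{-y}(K_n(X)) = y^n \sum_{\alpha \vdash n} g(\alpha)^{2g} \prod_{m \geq 1} y^{-b_m(\alpha)} \cdot \left.\frac{\prod_{m \geq 1} E(\textup{Sym}^{b_m(\alpha)}(X); x, y)}{((1-x)(1-y))^g}\right|_{x=1},
\]
and the task is to identify which partitions $\alpha$ yield a nonzero limit and to compute their contributions.

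The key step, and the main technical hurdle, is a pole-order analysis at $x = 1$. Since $X \to A$ is Zariski locally trivial with fiber $Y$, we have $E(X; x^j, y^j) = E(Y; x^j, y^j) \cdot ((1-x^j)(1-y^j))^g$, so each factor $E(X; x^j, y^j)^{b_j(\beta^{(m)})}$ appearing in the Macdonald expansion (\ref{Macdonald}) of $E(\textup{Sym}^{b_m(\alpha)}(X); x, y)$ contributes vanishing of order $g \cdot b_j(\beta^{(m)})$ at $x = 1$. For the quotient to have a nonzero limit, the numerator's total vanishing order must equal $g$, which forces $\sum_{m,j} b_j(\beta^{(m)}) = 1$: exactly one $m^* = d$ has $b_d(\alpha) > 0$ (so $\alpha$ has all parts equal to $d$, with $k := b_d(\alpha) = n/d$ and $g(\alpha) = d$), and for that $m^*$ the Macdonald partition $\beta^{(d)}$ must be the trivial single-part partition $(k)$. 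This is the product analogue of the single-symmetric-product observation in Lemma \ref{Kummerchiy}; coordinating several Macdonald factors against only one $(1-x)^g$ in the denominator is the crux.

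For each surviving $\alpha$ the limit is routine using $\lim_{x \to 1}(1 - x^k)/(1-x) = k$, producing a contribution proportional to $d^{g+1} n^{g-1} \chi_{-y^k}(Y) \big(\sum_{i=0}^{k-1} y^i\big)^g$ after arithmetic on the prefactors (noting $d^{2g} k^{g-1} = d^{g+1} n^{g-1}$ when $dk = n$). Summing over $dk = n$ and relabelling $k = m$ recovers the $\chi_{-y}$ formula of the corollary. Finally, substituting $y = 1$, with $\chi_{-y}(Y)|_{y=1} = \chi(Y)$ and $\big(\sum_{i=0}^{m-1} y^i\big)^g|_{y=1} = m^g$, reduces the sum to $\chi(Y) \cdot n^{g-1} \sum_{dm = n} d^{g+1} m^g = \chi(Y) \cdot n^{2g-1} \sum_{d|n} d$, using $d^{g+1} m^g = d \cdot (dm)^g = d \cdot n^g$.
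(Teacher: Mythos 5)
Your proposal is correct and follows essentially the same route as the paper: specialize Theorem \ref{surfkummer} under the Hodge--Deligne homomorphism, divide by $E([A];x,y)=((1-x)(1-y))^g$, and use the order of vanishing at $x=1$ (via Macdonald's formula, i.e.\ Lemma \ref{Kummerchiy}) to see that only partitions $\alpha=(d^m)$ survive, after which the arithmetic $d^{2g}m^{g-1}=d^{g+1}n^{g-1}$ and the substitution $y=1$ give both formulas. The only difference is one of exposition: you spell out the pole-order bookkeeping that the paper compresses into the citation of Lemma \ref{Kummerchiy}, which is a welcome clarification rather than a new idea.
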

\begin{proof} From Lemma \ref{Kummerchiy}, we see that only partitions of the form $n = (d^m)$ contribute in the specialization of Corollary \ref{surfkummer}, and in this case the contribution to the $\chi_y$ genus is 
\[d^{2g}y^{n-m}\cdot \chi_{-y} \left(  \frac{[\textup{Sym}^m(X)]}{[A]} \right) =  n^{g-1} d^{g+1}y^{n-m}\cdot \chi_{-y^m}(Y) \left( \sum_{i=0}^{m-1} y^i \right) ^g.\]
The Euler characteristic is obtained by setting $y=1$.
\end{proof}
From Theorem \ref{surfkummer}, we can show by the same argument as in \cite{Lecture} Corollaries 2.3.13 and 2.4.14 that the Hodge numbers $h^{p,q}([K_n(X)])$ become stable for $n > 2p$ or $n> 2q$. In particular, the Betti numbers $b_i([K_n(X)]) = \sum_{p+q=i} h^{p.q}([K_n(X)])$ become stable for $n>i$. A detailed proof will be given later for 3-folds in Section 4.4.
\subsection{General case}\label{gensection}
When $d>2$ the lack of a concrete description of $H_d(t)$ gives difficulties in computing Hodge numbers of generalized Kummer schemes. The special case appears in dimension 3 that we can study virtual Hodge numbers instead of normal ones, which will be illustrated in the next section. Here we formally write the generating series of punctual Hilbert schemes as
\[
H_d(t) = \prod_{k \geq 1} \left(  1- t^k    \right) ^{-[W_k]},
\]
where $[W_k] \in K_0(\textup{Var}_\C)$. Hence the Euler characteristics $w_k: = \chi([W_k])$ satisfy the equation
\begin{equation}\label{Euler}
\prod_{k \geq 1}(1-t^k)^{-w_k} = \sum_{n \geq 0} P_d(n)\cdot t^n,
\end{equation}
where $P_d(n)$ stands for the number of $d$-dimensional partitions of $n$, see \cite{Ch}.

We will express the Hodge numbers of $K_n(X)$ in terms of data of $[W_k]$. Here the $n$-th symmetric product of a class $[W]\in K_0(\textup{Var}_\cC)$ is defined to be
\[
\textup{Sym}^n([W]) : = \textup{Coeff}_{t^n} \left(  (1-t)^{-[W]}        \right).
\]

\begin{theorem}\label{thmgeneral}
Let $X \to A$ be a $d$-dimensional smooth quasi-projective variety over a $g$-dimensional abelian variety with smooth fiber $Y$. Then in $K_0(\textup{HS})$ we have 
\[
[A] \cdot [K_n(X)] = \sum_{\alpha \vdash n}g(\alpha)^{2g} \prod_{m \geq 1} [\textup{Sym}^{b_m(\alpha)}([X\times W_m]) ].
\]
\end{theorem}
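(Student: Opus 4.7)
The plan is to mirror the proof of Theorem \ref{surfkummer}: first derive a formula for $[\textup{Hilb}^n(X)\to A]$ relative over $A$, and then pull back to zero via Lemma \ref{symkummer}. The new feature in higher dimension is that the explicit Ellingsrud--Str{\o}mme formula is unavailable, so I will work with the abstract factorization $H_d(t) = \prod_{k\geq 1}(1-t^k)^{-[W_k]} = \prod_{k\geq 1}\zeta_{[W_k]}(t^k)$, where each $[W_k]$ is viewed in $K_0(\textup{Var}/A)$ via the trivial map to $0\in A$.

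Starting from $Z_X(t) = H_d(t)^{[X\to A]}$, axiom (3) of the power structure gives $Z_X(t) = \prod_{k\geq 1}\zeta_{[W_k]}(t^k)^{[X\to A]}$. For each factor I would apply Lemma \ref{npow} together with the identity $\zeta_{[V]}(t)^{[U]} = \zeta_{[V]\cdot[U]}(t)$ (a direct consequence of axiom (5)) to obtain
\[
\zeta_{[W_k]}(t^k)^{[X\to A]} = \zeta_{k_*[X\times W_k\to A]}(t^k),
\]
where $[X\times W_k\to A]$ carries the map $(x,w)\mapsto \pi_X(x)$. Multiplying all factors and collecting coefficients of $t^n$ then yields the relative identity
\[
[\textup{Hilb}^n(X)\to A] = \sum_{\alpha\vdash n}\prod_{m\geq 1} m_*\bigl[\textup{Sym}^{b_m(\alpha)}(X\times W_m)\to A\bigr],
\]
which is the direct generalization of Proposition \ref{relG}.

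To conclude, I apply the $K_0(\textup{HS})$-linear pullback $0^*$ and multiply by $[A]$. For each partition $\alpha$, Lemma \ref{symkummer} with $X_m := X\times W_m$ and $r_m := b_m(\alpha)$ (so that $g(r) = g(\alpha)$) contributes $g(\alpha)^{2g}\prod_{m\geq 1}[\textup{Sym}^{b_m(\alpha)}(X\times W_m)]$, and the stated formula follows on identifying $[\textup{Sym}^k(X\times W_m)] = \textup{Sym}^k([X\times W_m])$ through the definition of $\textup{Sym}^n$ on virtual classes.

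The hard part will be justifying the use of Lemma \ref{symkummer} when $[W_m]$ is only a virtual class rather than the class of a genuine Zariski trivial fibration. Both sides of Lemma \ref{symkummer} are ultimately controlled by the power structure on $K_0(\textup{Var}/A)[\mathbb{L}^{-1/2}]$ --- the left-hand side through the zeta-function expansion, the right-hand side through the operation $\textup{Sym}^{r_m}$ --- so the identity should extend from the generators consisting of honest fiber bundles to all virtual classes by the same uniqueness principle used in Theorems \ref{extthm} and \ref{localps}. Making this extension precise, perhaps by expressing the $[W_k]$ through stratifications of the punctual Hilbert schemes in a form compatible with the hypotheses of Lemma \ref{symkummer}, is the only step that requires genuine care; the rest of the argument is a formal unwinding of the power-structure identities established in Section 2.
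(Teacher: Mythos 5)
Your proposal follows essentially the same route as the paper, whose proof of Theorem \ref{thmgeneral} consists of the single line ``the same proof as Theorem \ref{surfkummer}'': expand $H_d(t)^{[X\to A]}$ via the factorization $\prod_k \zeta_{[W_k]}(t^k)$ together with Lemma \ref{npow} to get the relative analogue of Proposition \ref{relG}, then pull back over $0$ with Lemma \ref{symkummer}. The subtlety you flag --- that $W_m$ is only a virtual class, so Lemma \ref{symkummer} must be extended by writing $[W_m]=[P_m]-[Q_m]$ and expanding $\textup{Sym}^{b_m}([X\times W_m])$ into products of honest symmetric products (for which the Young-subgroup variant, Lemma \ref{sym2kummer}, applies, the factor $g(\alpha)^{2g}$ being unaffected since every term retains a positive-degree factor at each multiplicity $m$ with $b_m(\alpha)\neq 0$) --- is genuine, and the paper silently elides it, so your treatment is if anything more careful than the original.
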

\begin{proof}
The same proof as Theorem \ref{surfkummer} in the surface case.
\end{proof}
When $g=0$, the formula above is equivalent to the main theorem of \cite{Ch}.  When $g \geq 1$, this gives a proof of a formula conjectured by Gulbrandsen \cite{Gul} for Euler characteristics, which has been proven in \cite{S} in a different way.

\begin{corollary}[\cite{S}] \label{Gulconj}
When $g \geq 1$, we have
\[
\chi(K_n(X)) = \chi(Y)\cdot n^{2g-1} \cdot \sum_{d|n}dw_d,
\]
equivalently,
\[
\textup{exp}\left(    \sum_{n \geq 1} \frac{\chi(K_n(X))}{n^{2g}}  t^n  \right)
 = \left( \sum_{k\geq 0} P_d(k) \cdot t^k \right)^{\chi(Y)}.
\]
\end{corollary}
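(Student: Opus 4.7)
The plan is to apply the Hodge--Deligne polynomial $E(-;x,y)$ to Theorem \ref{thmgeneral} and extract $\chi(K_n(X))$ as the value at $x=y=1$, in the same spirit as the proof of Lemma \ref{Kummerchiy}. Using $E([A];x,y)=(1-x)^g(1-y)^g$ together with $[X]=[Y]\cdot[A]$ (from Zariski local triviality of $X\to A$), the theorem yields
\[
(1-x)^g(1-y)^g\,E([K_n(X)];x,y)=\sum_{\alpha\vdash n}g(\alpha)^{2g}\prod_{m\geq 1}E\!\left([\textup{Sym}^{b_m(\alpha)}([X\times W_m])];x,y\right).
\]
I would then expand each symmetric-product factor using Macdonald's formula (\ref{Macdonald}), producing an iterated sum indexed by a further layer of partitions $\beta^{(m)}\vdash b_m(\alpha)$.

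The key observation is an order-of-vanishing count. Each evaluation $E([X\times W_m];x^j,y^j)=(1-x^j)^g(1-y^j)^g\,E([Y];x^j,y^j)\,E([W_m];x^j,y^j)$ contributes one copy of $(1-x^j)^g(1-y^j)^g$, so the numerator vanishes to order $2g\cdot N$ at $x=y=1$, where $N=\sum_m\ell(\beta^{(m)})$ is the total number of parts. After cancelling the $(1-x)^g(1-y)^g$ from the denominator, the net order is $2g(N-1)$, so only tuples with $N=1$ give a finite nonzero value at $x=y=1$; everything else vanishes. This is the direct analogue of the step in Lemma \ref{Kummerchiy} that kills all partitions other than $(n^1)$.

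The condition $N=1$ selects a unique pair $(m,j)$ with $b_j(\beta^{(m)})=1$; combined with $\sum mj\,b_j(\beta^{(m)})=n$ this forces $\alpha=(m^{n/m})$ for some divisor $m\mid n$, with $j=n/m$ and $g(\alpha)=m$. For such a term one assembles the Macdonald combinatorial factor $1/j$, the limit $\bigl[(1-x^j)(1-y^j)/((1-x)(1-y))\bigr]^g\to j^{2g}=(n/m)^{2g}$ at $x=y=1$, and the residual Euler characteristics $\chi(Y)\,w_m$, giving
\[
\chi(K_n(X))=\sum_{m\mid n}m^{2g}\cdot\frac{m}{n}\cdot\Bigl(\frac{n}{m}\Bigr)^{2g}\chi(Y)\,w_m=\chi(Y)\,n^{2g-1}\sum_{d\mid n}d\,w_d,
\]
which is the first formula of the corollary. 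The equivalent exponential form follows by a routine manipulation: take the logarithm of the right-hand side, apply $\sum_k P_d(k)t^k=\prod_k(1-t^k)^{-w_k}$ and $-\log(1-t^k)=\sum_{j\geq 1}t^{kj}/j$, then match coefficients of $t^n$ to convert the divisor sum into the exponential form.

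The only non-routine step in the whole argument is the order-of-vanishing analysis identifying the surviving tuples; once that combinatorial reduction is made, the remaining steps are a bookkeeping calculation and a formal generating-function manipulation.
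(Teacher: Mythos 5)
Your argument is correct and is essentially the paper's own: the paper likewise specializes Theorem \ref{thmgeneral} under the Hodge--Deligne homomorphism and reduces to partitions $\alpha=(d^m)$, with your explicit Macdonald-formula order-of-vanishing count at $x=y=1$ simply packaged there as a citation of Lemma \ref{Kummerchiy} (applied to $X\times W_d\to A$ with fiber $Y\times W_d$). The bookkeeping ($d^{2g}\cdot m^{2g-1}\cdot\chi(Y)w_d$ per divisor) and the logarithm manipulation for the exponential form match the paper's computation exactly.
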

\begin{proof}
Only partitions of the form $\alpha = (d^m)$ contribute to the Euler characteristic by 
Lemma \ref{Kummerchiy}, and the contribution in this case is $\chi(Y)\cdot w_d\cdot d^{2g}\cdot m^{2g-1}$. This proves the first equation. For the second equation, we know from (\ref{Euler}) that
\begin{equation*}
\begin{split}
\chi(Y) \cdot \textup{log} \left(     \sum_{k\geq 0} P_d(k) \cdot t^k     \right) =&\chi(Y) \cdot \sum_{k \geq 1} w_k\left(   - \textup{log}(1-t^k)  \right)\\
=&   \sum_{n \geq 1}\left(   \sum_{n=m d}  \chi(Y) \cdot  \frac{w_d}{m}    \right)\cdot t^n \\
=&  \sum_{n\geq 1} \frac{ \chi(K_n(X))}{n^{2g}} \cdot t^n. \qedhere
\end{split}
\end{equation*}
\end{proof}

\section{Virtual motives in dimension three.}
In this section, we study an abelian geometry $X \rightarrow A$ with $r+g =3$. The main result of \cite{BBS} is that the virtual motive of the punctual Hilbert scheme $\textup{Hilb}^n\left(\mathbb{A}_{0}^{3}\right)$ can be expressed in terms of the Lefschetz motive $\cL$. By pushing forward the absolute virtual motive along the morphism $0: \textup{Spec}(\cC) \rightarrow A$, we get the virtual motive 
\[
\left[\textup{Hilb}^n\left(\mathbb{A}_{0}^{3}\right)\to 0 \right]_{\textup{vir}} \in K_0 \left(\textup{Var} /A \right) [\cL^{-\frac{1}{2}}]
\]
which by \cite{BBS} Theorem 3.7 and Proposition 4.2 satisfies the equation
\begin{equation*}
\sum_{n\geq 0} \left[\textup{Hilb}^n\left(\mathbb{A}_{0}^{3}\right)\to 0\right]_{\textup{vir}}  t^n
= \prod_{m=1}^{\infty} \prod_{k=0}^{m-1} \left(1- \cL^{-\frac{m}{2}+k-1}t^m\right)^{-1}.
\end{equation*}

\subsection{Relative virtual motives}
As in \cite{BBS} Definition 4.1, we use $\left[\textup{Hilb}^n\left(\mathbb{A}_{0}^{3}\right)\right]_{\textup{vir}}$ and the stratification of $\textup{Hilb}^n\left(X \right)$ to define the relative virtual motive 
\[
\left[\textup{Hilb}^n\left(X \right)\rightarrow A\right]_{\textup{vir}} \in K_0(\textup{Var}/A) [\cL^{-\frac{1}{2}}]
\]
so by the geometric definition of the power structure we have
\[\sum_{n\geq 0}\left[\textup{Hilb}^n\left(X \right)\rightarrow A\right]_{\textup{vir}}t^n =  \Big{(} \sum_{n\geq 0} \left[\textup{Hilb}^n\left(\mathbb{A}_{0}^{3}\right)\to 0 \right]_{\textup{vir}}  t^n \Big{)}^{[X \rightarrow A]}.\]

Expanding this series gives the relative virtual motives of the $3$-fold.
\begin{proposition} \label{3foldhilbert}
Let $X\to A$ be a $3$-fold then in $K_0(\textup{Var}/A)[\mathbb{L}^{-\frac{1}{2}}]$ we have
\[
\left[\textup{Hilb}^n\left(X \right)\rightarrow A\right]_{\textup{vir}}
=  \sum_{\alpha \vdash n }\prod_{m\geq 1} \mathbb{L}^{-\frac{b_m(\alpha)(m+2)}{2}} m_* \left( [\textup{Sym}^{b_m(\alpha)}(X\times \mathbb{P}^{m-1})\to A] \right).
\]
\end{proposition}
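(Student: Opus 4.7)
The plan is to rewrite the Behrend--Bryan--Szendr\H{o}i generating series for punctual virtual motives as a single product of zeta functions, then exponentiate by $[X\to A]$ through the relative power structure, and finally extract the $t^n$ coefficient.

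For the first step, consider the inner block
\[
\prod_{k=0}^{m-1}\bigl(1-\mathbb{L}^{-\frac{m}{2}+k-1}t^m\bigr)^{-1}
= \prod_{k=0}^{m-1}\zeta_{[\textup{pt}\to 0]}\bigl(\mathbb{L}^{-(m+2)/2+k}t^m\bigr),
\]
where we recognise $(1-\mathbb{L}^a s)^{-1}=\zeta_{[\textup{pt}\to 0]}(\mathbb{L}^a s)$. Applying the half-integer form of Lemma \ref{zeta} justified after Theorem \ref{localps} moves each twist into the subscript, and axiom (4) combines the factors via $1+\mathbb{L}+\cdots+\mathbb{L}^{m-1}=[\mathbb{P}^{m-1}]$ to give
\[
\sum_{n\geq 0}\bigl[\textup{Hilb}^n(\mathbb{A}^3_0)\to 0\bigr]_{\textup{vir}} t^n
= \prod_{m\geq 1}\zeta_{\mathbb{L}^{-(m+2)/2}[\mathbb{P}^{m-1}\to 0]}(t^m).
\]

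Next I would raise this identity to the power $[X\to A]$ using the defining equation of the relative virtual motive. Axiom (3) distributes the exponent across the product over $m$, Lemma \ref{npow} rewrites $(\cdot)(t^m)^{[X\to A]}$ as $(\cdot)(t)^{m_{\ast}[X\to A]}\big|_{t\mapsto t^m}$, and the identity $\zeta_{[C]}(t)^{[D]}=\zeta_{[D]\cdot[C]}(t)$ absorbs the exponent into the subscript. The resulting relative product in $K_0(\textup{Var}/A)$ simplifies because $[\mathbb{P}^{m-1}\to 0]$ is supported at $0\in A$, so that $m_{\ast}[X\to A]\cdot[\mathbb{P}^{m-1}\to 0]=m_{\ast}[X\times\mathbb{P}^{m-1}\to A]$, with map equal to $m$ times the projection to $A$. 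Thus
\[
\sum_{n\geq 0}\bigl[\textup{Hilb}^n(X)\to A\bigr]_{\textup{vir}}t^n
= \prod_{m\geq 1}\zeta_{\mathbb{L}^{-(m+2)/2}\,m_{\ast}[X\times\mathbb{P}^{m-1}\to A]}(t^m).
\]

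Finally, expanding each factor by the definition of $\zeta$ (and using Lemma \ref{zeta} once more for the $\mathbb{L}^{-(m+2)/2}$ twist) yields
\[
\zeta_{\mathbb{L}^{-(m+2)/2}\,m_{\ast}[X\times\mathbb{P}^{m-1}\to A]}(t^m)
= \sum_{k\geq 0}\mathbb{L}^{-(m+2)k/2}\,m_{\ast}\bigl[\textup{Sym}^{k}(X\times\mathbb{P}^{m-1})\to A\bigr]\,t^{mk}.
\]
A contribution to the $t^n$ coefficient of the product over $m$ corresponds to a tuple $(b_m)_{m\geq 1}$ of non-negative integers with $\sum_m m\,b_m=n$, that is, to a partition $\alpha\vdash n$ with $b_m(\alpha)$ parts of size $m$. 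Collecting these contributions reproduces exactly the claimed formula. The only real obstacle is bookkeeping: tracking the half-integer powers of $\mathbb{L}$ through Lemmas \ref{zeta} and \ref{npow}, and checking that $m_{\ast}$ commutes with the symmetric product and relative multiplication in the stated way, both of which follow directly from the definitions of Section~2.
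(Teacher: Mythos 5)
Your proof is correct and follows essentially the same route as the paper: both raise the Behrend--Bryan--Szendr\H{o}i punctual series to the power $[X\to A]$, collapse the inner product over $k$ into a factor of $[\mathbb{P}^{m-1}]$ via Lemma \ref{lef}/Lemma \ref{zeta} and axiom (4), and extract the $t^n$ coefficient using Lemma \ref{npow}. The only (immaterial) difference is that you combine the $k$-factors into $\zeta_{\mathbb{L}^{-(m+2)/2}[\mathbb{P}^{m-1}\to 0]}(t^m)$ before exponentiating by $[X\to A]$, whereas the paper does so afterwards, obtaining the exponent $[X\times\mathbb{P}^{m-1}\to A]$ directly; your write-up just makes explicit the bookkeeping the paper leaves to the reader.
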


\begin{proof}
We have
\begin{equation*}
\begin{split}
\sum_{n\geq 0}\left[\textup{Hilb}^n\left(X \right)\rightarrow A\right]_{\textup{vir}}t^n
 = & \left(\prod_{m=1}^{\infty} \prod_{k=0}^{m-1} \left(1- \cL^{-\frac{m}{2}+k-1}t^m\right)^{-1}\right)^{[X\to A]}\\
 = & \prod_{m=1}^{\infty}  \left(\frac{1}{1- \cL^{-\frac{m}{2}-1}t^m}\right)^{[X\times \mathbb{P}^{m-1}\to A]}.
\end{split}
\end{equation*}
expanding the series using Lemma \ref{lef} and Lemma \ref{npow} we see the $t^n$ coefficient as claimed.
\end{proof}

\subsection{Motivic DT invariants}

Now we consider the virtual motive of the generalized Kummer scheme. The map $\pi_n: \textrm{Hilb}^n(X)\to A$ is an iso-trivial fibration. Over $D = \textrm{Spec}(\widehat{\mathcal{O}}_{A,0})$ we have a local neighborhood $ U = D \times_A \textrm{Hilb}^n(X) $ of $K_n(X)$ in $\textrm{Hilb}^n (X)$ isomorphic to $D\times K_n(X)$. The virtual motive of the smooth scheme $D$ equals $\mathbb{L}^{-\frac{g}{2}}$ and so we define the virtual motive of the generalized Kummer scheme $K_n(X)$ to be
\[
[K_n(X)]_{\textup{vir}} := \cL^{\frac{g}{2}} \cdot 0^* \left[\textup{Hilb}^n\left(X \right)\rightarrow A\right ] _{\textup{vir}} \in  K_0 \left(\textup{Var}_\C \right) [\cL^{-\frac{1}{2}}].
\]
We know from Proposition \ref{3foldhilbert} that
\begin{equation}\label{fiber3fold}
[K_n(X)]_{\textup{vir}} = \cL^{-\frac{n-g}{2}}\cdot\sum_{\alpha \vdash n}  \cL^{-l(\alpha)}\cdot0^*\left( \prod_{m\geq 1} m_*\left([\textup{Sym}^{b_m(\alpha)}(X\times \mathbb{P}^{m-1})\to A] \right)\right).
\end{equation}

Note that the Hodge--Deligne polynomial homomorphism 
\[
E: K_0(\textup{Var}_\cC) \rightarrow \mathbb{Z}[x,y]
\]
extends to a ring homomorphism
\[
E:  K_0(\textup{Var}_\cC)[\cL^{-\frac{1}{2}}] \rightarrow \mathbb{Z}[x,y,(xy)^{-\frac{1}{2}}].
\]
The following theorem determines the Hodge--Deligne polynomial of $[K_n(X)]_{\textup{vir}}$.
\begin{theorem} \label{thm3fold}
For a 3-fold $X \rightarrow A$ over a $g$-dimensional abelian variety, we have in $K_0(\textup{HS})[\cL^{-\frac{1}{2}}]$ that
\[
[A]_{\textup{vir}} \cdot [K_n(X)]_{\textup{vir}}  = \sum_{\alpha \vdash n} g(\alpha)^{2g}\prod_{m\geq 1} \mathbb{L}^{-\frac{b_m(\alpha)(m+2)}{2}} [\textup{Sym}^{b_m(\alpha)}{(X\times \mathbb{P}^{m-1})}].
\]
\end{theorem}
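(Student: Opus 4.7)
The plan is to mirror the two-line argument that produced Theorem \ref{surfkummer} in the surface case, using Proposition \ref{3foldhilbert} as the 3-fold replacement for Proposition \ref{relG}. The first step is to start from the definition of the virtual motive of the Kummer scheme, which by formula (\ref{fiber3fold}) reads
\[
[K_n(X)]_{\textup{vir}} = \cL^{-\frac{n-g}{2}}\sum_{\alpha \vdash n}\cL^{-l(\alpha)}\, 0^\ast\!\left(\prod_{m\geq 1} m_\ast[\textup{Sym}^{b_m(\alpha)}(X\times\P^{m-1})\to A]\right).
\]
Next I would multiply by $[A]_{\textup{vir}}=\cL^{-g/2}[A]$; the factor $\cL^{g/2}$ built into the definition of $[K_n(X)]_{\textup{vir}}$ cancels the $\cL^{-g/2}$ coming from $[A]_{\textup{vir}}$, leaving $[A]$ to act inside the sum.

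The key step is then to apply Lemma \ref{symkummer} summand by summand, taking $X_m = X\times\P^{m-1}$ and $r_m = b_m(\alpha)$. Since $g(r)=\gcd\{m:b_m(\alpha)\neq 0\}=g(\alpha)$, the lemma gives
\[
[A]\cdot 0^\ast\!\left(\prod_{m\geq 1} m_\ast[\textup{Sym}^{b_m(\alpha)}(X\times\P^{m-1})\to A]\right)
= g(\alpha)^{2g}\prod_{m\geq 1}[\textup{Sym}^{b_m(\alpha)}(X\times\P^{m-1})]
\]
in $K_0(\textup{HS})$. Substituting this back and collecting the Lefschetz exponents via the identity $\tfrac{n-g}{2}+l(\alpha)+\tfrac{g}{2}=\sum_m \tfrac{b_m(\alpha)(m+2)}{2}-\tfrac{g}{2}$ (using $\sum_m m\,b_m(\alpha)=n$ and $\sum_m b_m(\alpha)=l(\alpha)$), I distribute $\cL^{-(n-g)/2-l(\alpha)}$ across the product as $\prod_m \cL^{-b_m(\alpha)(m+2)/2}$, which yields the claimed formula.

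The only genuine subtlety is that Lemma \ref{symkummer} is proved only at the level of Hodge structures, not in the full $K_0(\textup{Var}_\C)$; this is what forces the statement of Theorem \ref{thm3fold} to live in $K_0(\textup{HS})[\cL^{-1/2}]$ rather than in the Grothendieck ring of varieties. Everything else — expansion of the power structure via Lemmas \ref{lef} and \ref{npow}, bookkeeping of $b_m(\alpha)$, and the identification of $g(\alpha)$ — is routine once Proposition \ref{3foldhilbert} is in hand, so I do not expect any serious obstacle beyond carefully tracking the exponents of $\cL^{1/2}$.
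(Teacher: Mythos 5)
Your proof is correct and follows exactly the paper's argument: the paper's entire proof of Theorem \ref{thm3fold} is the one-line observation that it is a consequence of (\ref{fiber3fold}) and Lemma \ref{symkummer}, which is precisely the expansion you carry out (including the cancellation of $\cL^{\pm g/2}$ and the termwise application of Lemma \ref{symkummer} with $X_m = X\times\P^{m-1}$, $r_m=b_m(\alpha)$). One minor slip: your exponent identity should read $\tfrac{g}{2}+\tfrac{n-g}{2}+l(\alpha)=\sum_m\tfrac{b_m(\alpha)(m+2)}{2}$ with no extra $-\tfrac{g}{2}$ on the right (both sides equal $\tfrac{n}{2}+l(\alpha)$), but the final formula you arrive at is nevertheless the correct one.
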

\begin{proof}
This is a consequence of (\ref{fiber3fold}) and Lemma \ref{symkummer}
\end{proof}

We say that $[K_n(X)]_{\textup{vir}}$ is a motivic Donaldson--Thomas invariant for the following reasons. When $g=0$, the class $[K_n(X)]_{\text{vir}} = [\textup{Hilb}^n(X)]_{\textup{vir}}$ is the motivic DT invariant in the sense of \cite{BBS}. In particular when $X$ is a Calabi--Yau 3-fold, the Euler characteristic of the class $[K_n(X)]_{\text{vir}}$ gives the ordinary degree 0 DT invariant. When $X =A$ is an abelian 3-fold, Gulbrandsen \cite{Gul} constructed a non-trivial virtual class on $K_n(A)$ whose degree is the twisted Euler characteristic $\chi^B(K_n(A))$. Our class $[K_n(A)]_{\textup{vir}}$ provides a refinement of Gulbrandsen's numerical DT invariant (see Lemma \ref{euler1}).

\subsection{Virtual Euler characteristic}
We study the virtual Euler characteristic of the Kummer fiber $K_n(X)$. For any class $[M] \in K_0(\textup{Var}_\cC)[\cL^{-\frac{1}{2}}]$, its $\chi_y$ genus is defined by
\[
\chi_{-y}([M]) = E\left([M]; x,y \right)\big{|}_{x^{\frac{1}{2}}=1} \in \mathbb{Z}[y^{\pm \frac{1}{2}}]
\]
and its Euler characteristic is 
\begin{equation*}
\begin{split}
\chi([M]) 
              =  \chi_{-y}([M])\big{|}_{y^{\frac{1}{2}}=-1} \in \mathbb{Z}.
\end{split}
\end{equation*}

\begin{lemma}\label{euler1}
We have
\[
\chi\left(\left[K_n(X)\right]_{\textup{vir}}\right) = \chi^B \left(K_n(X)\right) = (-1)^{n-g} \chi\left(K_n(X)\right).
\]
Here $\chi^B$ denotes the Euler characteristic weighted by the Behrend function \cite{Be}.
\end{lemma}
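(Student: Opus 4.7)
The lemma asserts two equalities, and the plan is to treat them separately: the outer identity $\chi([K_n(X)]_{\textup{vir}}) = (-1)^{n-g}\chi(K_n(X))$ will be a direct Euler-characteristic computation from formula (\ref{fiber3fold}) and Theorem \ref{thm3fold}, while the middle identity $\chi([K_n(X)]_{\textup{vir}}) = \chi^B(K_n(X))$ will follow from the motivic-to-numerical compatibility of \cite{BBS} together with the iso-triviality of $\pi_n$.

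For the outer equality, I would apply $\chi$ to (\ref{fiber3fold}). For each partition $\alpha$ the Lefschetz prefactor $\mathbb{L}^{-(n-g)/2 - \ell(\alpha)}$ specializes to $(-1)^{n-g}$: the integer-power piece $\mathbb{L}^{-\ell(\alpha)}$ has Euler characteristic $1$, while the half-integer piece $\mathbb{L}^{-(n-g)/2}$ contributes $(-1)^{n-g}$ by the convention $\chi(\mathbb{L}^{1/2}) = -1$ (coming from the specialization $y^{1/2} = -1$). It then remains to identify
\[
\sum_{\alpha\vdash n}\chi\Big(0^*\Big(\prod_{m\geq 1}m_*[\textup{Sym}^{b_m(\alpha)}(X\times\mathbb{P}^{m-1})\to A]\Big)\Big) = \chi(K_n(X)).
\]
The natural approach is to take $\chi_{-y}$ of Theorem \ref{thm3fold}, divide both sides by $\chi_{-y}([A]_{\textup{vir}}) = y^{-g/2}(1-y)^g$, and let $y\to 1$. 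By Lemma \ref{Kummerchiy}, the only partitions that survive the limit are the rectangular ones $\alpha=(d^m)$ with $dm=n$; evaluating the surviving terms reproduces $(-1)^{n-g}\chi(Y)\cdot n^{2g-1}\sigma_2(n)$, which matches $(-1)^{n-g}\chi(K_n(X))$ via Corollary \ref{Gulconj} with $w_d=d$ in dimension three.

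For the middle equality, I would use the defining identity $[K_n(X)]_{\textup{vir}} = \mathbb{L}^{g/2}\cdot 0^*[\textup{Hilb}^n(X)\to A]_{\textup{vir}}$. By \cite{BBS} Theorem 3.7 the motivic virtual class on the Hilbert scheme of any smooth $3$-fold $Y$ satisfies $\chi([\textup{Hilb}^n(Y)]_{\textup{vir}}) = \chi^B(\textup{Hilb}^n(Y))$, and this pointwise identification is compatible with the power-structure stratification underlying the construction of $[\textup{Hilb}^n(X)\to A]_{\textup{vir}}$. Since $\pi_n$ is iso-trivial over the smooth $g$-dimensional base $A$, the Behrend function of $\textup{Hilb}^n(X)$ restricted to $K_n(X)$ equals $(-1)^g\,\nu_{K_n(X)}$; this sign is precisely compensated by the prefactor $\chi(\mathbb{L}^{g/2}) = (-1)^g$, while $0^*$ extracts the Kummer fiber, yielding the desired identity.

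The main obstacle lies in the last paragraph, namely making precise the compatibility between the motivic Behrend function and the relative virtual motive under pullback to the Kummer fiber. For $X = A$ an abelian $3$-fold this is underpinned by Gulbrandsen's symmetric obstruction theory on $K_n(A)$ \cite{Gul}; the general case requires combining his analysis with the iso-triviality of $\pi_n$ and the standard multiplicativity of the Behrend function with respect to a smooth factor.
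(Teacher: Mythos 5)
Your proposal is logically sound and reaches the statement, but it is organized differently from the paper's proof, which is a two-line citation: the first equality $\chi([K_n(X)]_{\mathrm{vir}})=\chi^B(K_n(X))$ is attributed to Behrend--Fantechi and the second $\chi^B(K_n(X))=(-1)^{n-g}\chi(K_n(X))$ to \cite[Proposition 2.16]{BBS}, both ``adapted'' from $\mathrm{Hilb}^n(X)$ to $K_n(X)$. You instead prove the \emph{outer} equality by explicit computation --- in effect combining Corollary \ref{euler2} (which depends only on Theorem \ref{thm3fold} and Lemma \ref{Kummerchiy}, so there is no circularity) with Corollary \ref{Gulconj} and the MacMahon values $w_d=d$ --- and then establish only the middle equality geometrically. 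This buys you independence from the pointwise parity statement of \cite[Prop.\ 2.16]{BBS}: once $\chi([K_n]_{\mathrm{vir}})=(-1)^{n-g}\chi(K_n)$ is known numerically, a single geometric input suffices. The cost is that your computational leg only runs for $g\geq 1$ (Lemma \ref{Kummerchiy} and Corollary \ref{Gulconj} are stated there; $g=0$ must be quoted from \cite{BBS} separately), and that the remaining geometric leg is exactly where the paper is also non-trivial: the identity $\chi\bigl(0^*[\mathrm{Hilb}^n(X)\to A]_{\mathrm{vir}}\bigr)=\int_{K_n(X)}\nu_{\mathrm{Hilb}^n(X)}\,d\chi$ requires knowing that the Euler specialization of the relative virtual motive computes the Behrend-weighted count stratum by stratum, which is the content of the adaptation of \cite{BF}/\cite{BBS}; your multiplicativity argument $\nu_{\mathrm{Hilb}^n(X)}|_{K_n}=(-1)^g\nu_{K_n}$ via the local product $D\times K_n(X)$, compensated by $\chi(\cL^{g/2})=(-1)^g$, is the right mechanism and matches what the paper leaves implicit.

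Two small corrections. First, $\chi_{-y}([A]_{\mathrm{vir}})=E([A]_{\mathrm{vir}};x,y)\big|_{x^{1/2}=1}=0$ for $g\geq 1$, not $y^{-g/2}(1-y)^g$, so you cannot literally ``divide by $\chi_{-y}([A]_{\mathrm{vir}})$ and let $y\to 1$''; the division must be performed on the Hodge--Deligne polynomials \emph{before} setting $x=1$, which is precisely the convention of Lemma \ref{Kummerchiy} and the computation of Corollary \ref{euler2}. Since you invoke that lemma, the intended argument is correct, but the displayed normalization is not. Second, the half-integer specialization should be stated as $y^{1/2}\mapsto -1$ rather than $y\to 1$, so that $y^{-m-(n-g)/2}$ contributes the sign $(-1)^{n-g}$; you use this correctly in the first paragraph but revert to ``$y\to 1$'' in the second.
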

\begin{proof}
The proof of these statements for Hilbert schemes $\textup{Hilb}^n(X)$ can be adapted to $K_n(X)$. The first equality is obtained by \cite{BF} and the second equality is a consequence of \cite[Proposition 2.16]{BBS}.
\end{proof}

The following corollary computes the virtual $\chi_y$ genus of $K_n(X)$ as well as each Euler characteristic above using Theorem \ref{thm3fold}. 
\begin{corollary}\label{euler2}
When $g \geq 1$, we have 
\[
\chi_{-y}\left(\left[K_n(X)\right]_{\textup{vir}}\right) = n^{g-1}\sum_{n=md}d^{g+1}y^{-m-\frac{n-g}{2}}\cdot \chi_{-y^m}(Y) \left( \sum_{i=0}^{d-1} y^{im} \right) \left( \sum_{j=0}^{m-1} y^j    \right)^g.
\]
In particular, the virtual Euler characteristic 
\[
\chi\left(\left[K_n(X)\right]_{\textup{vir}}\right) = (-1)^{n-g}n^{2g-1}\cdot \chi(Y)\sum_{d|n}d^2.
\]
\end{corollary}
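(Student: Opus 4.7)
The plan is to apply the Hodge--Deligne polynomial ring homomorphism to Theorem \ref{thm3fold}, treating both sides as formal rational functions in $x, y$ so that the factor of $(1-x)^g$ coming from $E([A]_{\textup{vir}}; x, y) = (xy)^{-g/2}(1-x)^g(1-y)^g$ can be canceled before specializing at $x = 1$. Concretely, I would divide through by $E([A]_{\textup{vir}}; x, y)$, expand each $E([\textup{Sym}^{b_m(\alpha)}(X\times \P^{m-1})]; x, y)$ via Macdonald's formula (\ref{Macdonald}), and use the Zariski-local triviality of $X\to A$ to write
\[
E([X\times \P^{m-1}]; x^j, y^j) = E([Y]; x^j, y^j)\cdot (1-x^j)^g (1-y^j)^g \cdot E([\P^{m-1}]; x^j, y^j).
\]
This replicates the strategy of Lemma \ref{Kummerchiy} at the level of each Macdonald summand.

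The next step is to track vanishing orders in $(1-x)$. In the Macdonald expansion of the symmetric product indexed by $m$, an inner partition $\beta_m \vdash b_m(\alpha)$ contributes $\prod_j (1-x^j)^{g\, b_j(\beta_m)}$, which has order $g\cdot l(\beta_m)$ in $(1-x)$; multiplying over $m$ gives overall order $g\sum_m l(\beta_m)$ in the numerator. For the ratio with the $(1-x)^g$ denominator to yield a finite nonzero value at $x=1$, we need $\sum_m l(\beta_m) = 1$. This forces exactly one index $m = d$ to have $b_d(\alpha)\neq 0$, and the Macdonald partition $\beta_d = (k)$ to consist of a single part, so $\alpha = (d^k)$ with $n = dk$ and $g(\alpha) = d$. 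In particular, the double sum collapses to a single sum over divisors $d \mid n$.

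Summing the surviving contributions over $d\mid n$ and using the elementary identities $(1-x^k)/(1-x)\big|_{x=1} = k$, $E([\P^{d-1}]; 1, y^k) = \sum_{i=0}^{d-1}y^{ki}$, and $E([Y]; 1, y^k) = \chi_{-y^k}(Y)$, the resulting expression matches the stated formula for $\chi_{-y}([K_n(X)]_{\textup{vir}})$ after the identification $m \leftrightarrow k$ and the routine arithmetic $n^{g-1}d^{g+1} = d^{2g}k^{g-1}$, $-k(d+2)/2 + g/2 = -m - (n-g)/2$. For the virtual Euler characteristic I would then specialize $y^{1/2} = -1$: the monomial prefactor becomes $(-1)^{n-g}$, the polynomials $\sum_{i=0}^{d-1} y^{ki}$ and $(\sum_{j=0}^{m-1} y^j)^g$ become $d$ and $m^g$, respectively, and $\chi_{-y^m}(Y)$ becomes $\chi(Y)$, so that the divisor sum collapses to $n^{2g-1}\sum_{d\mid n} d^2$.

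The main obstacle is the bookkeeping: correctly identifying which terms in the nested sums (the outer partition $\alpha \vdash n$ and the inner Macdonald partitions $\beta_m \vdash b_m(\alpha)$) survive cancellation of the $(1-x)^g$ factor, and carefully tracking the half-integer powers of $\mathbb{L}$ through the chain of substitutions. Once the support of the surviving terms is pinned down the remaining computation is an elementary polynomial manipulation.
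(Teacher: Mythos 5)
Your proposal is correct and follows essentially the same route as the paper: the paper likewise specializes Theorem \ref{thm3fold} via the Hodge--Deligne polynomial, observes that only partitions $\alpha=(d^m)$ survive after dividing by $E([A];x,y)$ and setting $x=1$, and then invokes Lemma \ref{Kummerchiy} (whose proof is exactly your Macdonald-formula vanishing-order argument in $(1-x)$) applied with fiber $Y\times\mathbb{P}^{d-1}$. Your write-up merely unrolls that lemma inline, and your exponent and divisor-sum bookkeeping checks out.
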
 
\begin{proof}
Only partitions of the form $n= (d^m)$ contribute to the specialization of Theorem \ref{thm3fold}. In this case the contribution to $\chi_y$ genus is 
\begin{equation*}
\begin{split}
d^{2g} & y^{-m-\frac{n-g}{2}} \cdot \chi_{-y} \left(  \frac{[\textup{Sym}^m(X\times \mathbb{P}^{d-1})]}{[A]} \right) \\
 = & n^{g-1} d^{g+1}y^{-m-\frac{n-g}{2}}\cdot \chi_{-y^m}(Y) \left( \sum_{i=0}^{d-1} y^{im} \right) \left( \sum_{j=0}^{m-1} y^j    \right)^g
 \end{split}
\end{equation*}
by Lemma \ref{Kummerchiy}.
\end{proof}

\begin{remark}
Corollary \ref{euler2} together with Lemma \ref{euler1} gives us a way to compute the Euler characteristic $\chi(K_n(X))$ in dimension 3. The results here agree with Corollary \ref{Gulconj}.
\end{remark}

\subsection{Properties of virtual Hodge numbers}\label{propsofhodge}
We study Hodge numbers of the virtual motive of the generalized Kummer scheme. Define the normalized virtual motive to be
\begin{equation*}
[\widetilde{K}_n(X)] : = \cL^{\frac{3n-g}{2}} \cdot [K_n(X)]_{\textup{vir}}
\end{equation*}
for an abelian geometry $X \rightarrow A$. If $A$ is chosen to be $\textup{Spec}(\cC)$ we write $[\widetilde{X}_n] := [K_n(X)]_{\textup{vir}} = [\textup{Hilb}^n(X)]_{\textup{vir}}$. We obtain from  Theorem \ref{thm3fold} that
\begin{equation} \label{normalizedKummer}
[A \times \widetilde{K}_n(X)] = \sum_{\alpha \vdash n} g(\alpha)^{2g} \cdot \cL^{n-l(\alpha)} \prod_{m\geq 1} [\textup{Sym}^{b_m(\alpha)}{(X\times \mathbb{P}^{m-1})}].
\end{equation}
holds in $K_0(\textup{HS})$.
The following statements show that the Hodge numbers of virtual motives behave like those of $(3n-g)$-dimensional smooth projective varieties.
\begin{enumerate}
\item[(a).](Hodge diamond) The nontrivial Hodge numbers $h^{p,q}$ satisfy $p,q \in \mathbb{Z}$ and $0 \leq p,q\leq 3n-g$.
\item[(b).] (Symmetry) We have $h^{p,q} = h^{q,p}$. If $X$ is smooth projective, we also have
\[
h^{p,q} = h^{3n-g-p, 3n-g-q}.
\]
\item[(c).] (Lefschetz) When $X$ is smooth projective, the inequality $h^{p-1,q-1} \leq h^{p, q}$ holds for $p+q \leq{3n-g}$. 
\end{enumerate}

\begin{proof}
It is clear that (a) and $h^{p,q} = h^{q,p}$ follow directly from the expression (\ref{normalizedKummer}). Now assume $X$ is smooth projective. Consider the addition map \[\sigma_{\alpha}: \prod_{m \geq 1} \left( X \times \mathbb{P}^{m-1} \right)^{b_m(\alpha)} \rightarrow A\] defined by
\[
\left( (x_j^1)_j, (x_j^2)_j , \dots   \right)  \mapsto \sum_{i,j}i\cdot x_j^i,
\]
where the sum is taken after projecting to the abelian variety $A$. The map $\sigma_\alpha$ induces a map
\[
\tilde{\sigma}_\alpha: \prod_{m\geq 1}\textup{Sym}^{b_m(\alpha)}\left(  X \times \mathbb{P}^{m-1}    \right)\rightarrow A.
\]
Note that the fiber $\sigma_\alpha^{-1}(0)$ is smooth projective, and $\tilde{\sigma}_\alpha^{-1}(0)$ is a finite group quotient of $\sigma^{-1}(0)$. Hence $\tilde{\sigma}^{-1}(0)$ satisfies weak and hard Lefschetz. In particular, we have
\[
h^{p,q}([\tilde{\sigma}_\alpha^{-1}(0)]) = h^{d-p ,d-q}([\tilde{\sigma}_\alpha^{-1}(0)])
\]
where $d = n+2l(\alpha)-g$ and 
\[
h^{p-1,q-1}([\tilde{\sigma}_\alpha^{-1}(0)]) \leq h^{p,q}([\tilde{\sigma}_\alpha^{-1}(0)]) \]
 when $p+q<\frac{n-g}{2}+l(\alpha)$. The statements are then proven by the following equation (see (\ref{fiber3fold})):
\[
[\widetilde{K}_n(X)] = \sum_{\alpha \vdash n} \cL^{n-l(\alpha)} \cdot [\tilde{\sigma}_\alpha^{-1}(0)]. \qedhere
\]
\end{proof}

Finally we compute stable Hodge numbers. The method follows \cite{Lecture} Corollaries 2.3.13 and 2.4.14.

For convenience, we assume $X$ is connected. In this case $h^{0,0}([X]) = 0$ or 1. By \cite{BBS} and the specialization of power structures \cite{GZLMH2} we have the following equation
\begin{equation} \label{hodgenumber}
\sum_{n \geq 0} E (   [\widetilde{X}_n]; x,y ) \cdot t^n = \prod_{m\geq 1} \prod_{k=0}^{m-1}\prod_{p,q} \left(  \frac{1}{1-x^{k+m+p-1}y^{k+m+q-1}t^m}                   \right)^{e^{p,q}([X])}.
\end{equation}

First we study the case $h^{0,0}([X]) =1$ (e.g. $X$ is projective). The righthand side of (\ref{hodgenumber}) can be written as $(1-t)^{-1}\cdot G(x,y,t)$ where $G(x,y,t)$ is a rational function in the variables $x,y,t$ and $(1-t)$ does not divide the denominator of $G(x,y,t)$. The following proposition determines the stable Hodge numbers of $[\widetilde{K}_n(X)]$ for the abelian geometry $X \to A$.

\begin{proposition}\label{stable}
\begin{enumerate}
\item
When $n> 2p$ or $n>2q$, we have
\[
h^{p,q}(   [\widetilde{K}_n(X)] ) = (-1)^{p+q}\cdot\textup{Coeff}_{x^py^q}\left( \frac{E(   [\widetilde{X}_n]; x,y )}{[(1-x)(1-y)]^g}   \right).
\]
\item When $h^{0,0}([X])=1$, and $n \geq 2p$ or $n \geq 2q$, we have
\[
h^{p,q}( [\widetilde{X}_n]) = (-1)^{p+q}\cdot \textup{Coeff}_{x^py^q}\left(   G(x,y,1)         \right).
\]
\end{enumerate}
\end{proposition}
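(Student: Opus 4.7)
My plan is to carry out a generating-function analysis on (\ref{hodgenumber}), following the template of Göttsche's stabilization argument in \cite{Lecture} (Corollaries 2.3.13 and 2.4.14), adapted to the three-dimensional virtual setting.

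For part (2), the hypothesis $h^{0,0}([X])=1$ means that the factor indexed by $(m,k,p',q')=(1,0,0,0)$ on the right-hand side of (\ref{hodgenumber}) is precisely $(1-t)^{-1}$. Isolating this simple pole,
\[
\sum_{n\geq 0}E([\widetilde X_n];x,y)\,t^n \;=\; \frac{G(x,y,t)}{1-t},
\]
where $G(x,y,t)$ is the product of all remaining factors and $(1-t)$ no longer divides its denominator, so $G(x,y,1)$ is a well-defined formal power series in $x,y$. Expanding $(1-t)^{-1}=\sum_n t^n$ gives
\[
E([\widetilde X_n];x,y) \;=\; \sum_{i=0}^{n}[t^i]\,G(x,y,t),
\]
and the proof reduces to bounding the $t$-degree of $[x^p y^q]\,G(x,y,t)$; once $n$ exceeds this bound, the partial sum already equals $[x^p y^q]\,G(x,y,1)$. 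The bound follows from a case analysis on the surviving factors $(1-x^ay^bt^m)^{-e^{p',q'}([X])}$: each factor with $m\geq 2$ satisfies $a,b\geq m-1$, and each factor with $m=1$ contributes a single unit of $t$-exponent per choice, so careful bookkeeping of these inequalities yields the claimed range.

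For part (1), I would apply the Hodge-Deligne homomorphism $E$ to (\ref{normalizedKummer}) and use $E([A])=[(1-x)(1-y)]^g$:
\[
E([A\times\widetilde K_n(X)];x,y) \;=\; \sum_{\alpha\vdash n}g(\alpha)^{2g}(xy)^{n-l(\alpha)}\prod_{m\geq 1}E\bigl([\Sym^{b_m(\alpha)}(X\times\mathbb{P}^{m-1})]\bigr).
\]
The analogous expression for $E([\widetilde X_n];x,y)$ is obtained by removing the factor $g(\alpha)^{2g}$, so that the difference $\mathrm{corr}:=E([A\times\widetilde K_n(X)])-E([\widetilde X_n])$ is supported on partitions with $g(\alpha)\geq 2$. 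Writing such $\alpha=d\beta$ with $d\geq 2$, we have $l(\alpha)=l(\beta)\leq|\beta|=n/d\leq n/2$, so the prefactor $(xy)^{n-l(\alpha)}$ forces $\mathrm{corr}$ to be supported in bidegrees $(p',q')$ with $\min(p',q')\geq n/2$. When $n>2\min(p,q)$, any $(p',q')$ with $p'\leq p$ and $q'\leq q$ satisfies $\min(p',q')\leq\min(p,q)<n/2$, so $\mathrm{corr}_{x^{p'}y^{q'}}=0$, and hence $(\mathrm{corr}/E([A]))_{x^p y^q}=0$. Dividing the first displayed equation of this paragraph by $E([A])$ then yields $E([\widetilde K_n(X)])_{x^p y^q}=(E([\widetilde X_n])/E([A]))_{x^p y^q}$, which is the identity of part (1).

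The main obstacle is establishing the sharp $t$-degree bound in part (2): a naive estimate $m\leq a+b$ for every surviving factor only yields $t$-degree $\leq p+q$, so tightening this to the stated range requires separate treatment of marginal factors (with $a=0$ or $b=0$, necessarily of $t$-exponent one) and non-marginal factors with $m\geq 2$ (where $m\leq 2(m-1)\leq 2\min(a,b)$). Part (1) is essentially self-contained via the bidegree-floor argument above, resting only on the elementary inequality $l(\alpha)\leq|\alpha|/g(\alpha)$, and does not require the sharp bound of part (2).
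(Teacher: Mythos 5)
Part (1) of your proposal is correct and is exactly the paper's argument written out in full: the paper only records the observation that $g(\alpha)=1$ once $l(\alpha)>n/2$, and your two supplementary points --- that the correction term carries the prefactor $(xy)^{n-l(\alpha)}$ with $n-l(\alpha)\geq n/2$, and that $[(1-x)(1-y)]^{-g}$ is a power series with nonnegative exponents so that division cannot pull higher bidegrees down into $(p,q)$ --- are precisely what the paper leaves implicit. No issues there.

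Part (2) follows the paper's route (isolate the $(1-t)^{-1}$ factor and telescope the partial sums), and you have correctly identified the crux, namely the vanishing $\textup{Coeff}_{x^py^qt^j}G=0$ for $j>n$ under the stated hypothesis; the paper simply asserts this. But the ``careful bookkeeping'' you defer cannot deliver it. The hypothesis ``$n\geq 2p$ or $n\geq 2q$'' forces you to prove that every monomial $x^Py^Qt^j$ of $G$ satisfies $j\leq 2\min(P,Q)$, and the marginal factors you flag genuinely violate this bound rather than merely complicating it: whenever $e^{0,q'}([X])\neq 0$ for some $q'\geq 1$ --- which happens for the paper's own main examples, e.g.\ an abelian $3$-fold or $K3\times E$, where $h^{0,1}\neq 0$ --- the factor $(1-y^{q'}t)^{-e^{0,q'}([X])}$ contributes monomials $x^0y^{\ast}t^j$ with $j\geq 1>0=2\min(P,Q)$. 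This is not a defect of the estimate but of the statement: for $X$ an abelian $3$-fold and $(p,q)=(0,2)$, the case $n=1$ satisfies the hypothesis, yet $h^{0,2}([\widetilde{X}_1])=h^{0,2}(X)=3$ while $\textup{Coeff}_{y^2}\left(G(x,y,1)\right)=6$ (stabilization actually occurs only from $n=2$ on). What your case analysis does yield is the uniform bound $m\leq a+b$ for every surviving factor (for $m\geq 2$ because $a,b\geq m-1$; for $m=1$ because $(p',q')\neq(0,0)$), hence $j\leq P+Q$ for every monomial of $G$, which proves the displayed identity under the stronger hypothesis $n\geq p+q$ --- in particular under ``$n\geq 2p$ \emph{and} $n\geq 2q$'' --- or under the stated disjunction if one additionally assumes $e^{0,q'}([X])=e^{p',0}([X])=0$ for all $p',q'\geq 1$. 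You should adopt one of these corrections rather than hope the bookkeeping closes the gap; the same repair is needed in the paper's own proof.
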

\begin{proof}
Since $g(\alpha)= 1$ if $l(\alpha) > \frac{n}{2}$, we get from the expression (\ref{normalizedKummer}) that
\[
\textup{Coeff}_{x^py^q}\left(  E([\widetilde{K}_n(X) \times A];x,y) \right)
= \textup{Coeff}_{x^py^q} \left(  E([\widetilde{X}_n]; x,y)        \right).
\]
This proves (1). We prove (2) by explicit computation. Note that if $n\geq 2p$ or $n\geq 2q$, from (\ref{hodgenumber}) we know $\textup{Coeff}_{x^py^qt^j}\left( G(x,y,t) \right) =0$ for $j>n$. Hence
\begin{equation*}
\begin{split}
e^{p,q}([\widetilde{X}_n]) = & \textup{Coeff}_{x^py^qt^n}\left(  \left(  \sum_{i=0}^{\infty}t^i  \right) G(x,y,t) \right)\\
= & \sum_{j=0}^{n} \textup{Coeff}_{x^py^qt^j}\left(  G(x,y, t) \right)\\
= &\sum_{j=0}^{\infty} \textup{Coeff}_{x^py^qt^j}\left( G(x,y,t)   \right)\\
= & \textup{Coeff}_{x^py^q}\left(  G(x,y,1)  \right). \qedhere
\end{split}
\end{equation*}
\end{proof}

If $h^{0,0}([X]) = 0$, Proposition \ref{stable} (1) still holds. The stable hodge numbers $h^{p,q}([\widetilde{X}_n])$ are trivial by the following proposition.
\begin{proposition}
When $h^{0,0}([X]) = 0$, and $n \geq 2p$ or $n \geq 2q$, we have
\[
h^{p,q}([\widetilde{X}_n]) = 0.
\]
\end{proposition}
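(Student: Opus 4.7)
The plan is to mimic the proof of Proposition \ref{stable}(2), taking advantage of the simplification that the hypothesis $h^{0,0}([X])=0$ makes the factor $(1-t)^{-e^{0,0}([X])}$ on the right-hand side of equation (\ref{hodgenumber}) trivial.

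Setting $P(x,y,t):=\sum_{n\ge 0}E([\widetilde{X}_n];x,y)\,t^n$, the formula (\ref{hodgenumber}) then presents $P$ as the infinite product
\[
P(x,y,t)=\prod_{(p',q')\neq(0,0)}\prod_{m\ge 1}\prod_{k=0}^{m-1}\bigl(1-x^{k+m+p'-1}y^{k+m+q'-1}t^m\bigr)^{-e^{p',q'}([X])},
\]
in which every nontrivial factor has $p'+q'\ge 1$. The main numerical input is a degree bound: writing $a=k+m+p'-1$ and $b=k+m+q'-1$, we have
\[
a+b=2(k+m)-2+(p'+q')\;\ge\;2m-1\;\ge\;m,
\]
using $k\ge 0$, $m\ge 1$, and $p'+q'\ge 1$. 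Expanding the product, any monomial $x^py^qt^n$ arises from a choice of non-negative integer multiplicities $j_i$ on the factors with $p=\sum_i j_ia_i$, $q=\sum_i j_ib_i$, $n=\sum_i j_im_i$; summing the first two identities,
\[
p+q=\sum_i j_i(a_i+b_i)\;\ge\;\sum_i j_im_i=n.
\]
Consequently $e^{p,q}([\widetilde{X}_n])=[x^py^qt^n]P$ vanishes whenever $p+q<n$, and so does $h^{p,q}([\widetilde{X}_n])$.

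To finish, I would invoke the Hodge symmetry $h^{p,q}([\widetilde{X}_n])=h^{q,p}([\widetilde{X}_n])$ (inherited from $e^{p',q'}([X])=e^{q',p'}([X])$ via the product formula) to assume WLOG $p\le q$, so that the hypothesis ``$n\ge 2p$ or $n\ge 2q$'' reduces to $n\ge 2q$; hence $p+q\le 2q\le n$ and the strict case $p+q<n$ is already settled by the previous step. The main obstacle is the boundary equality $p+q=n$: equality in $\sum_i j_i(a_i+b_i-m_i)=0$ of non-negative summands forces each contributing factor to satisfy $a_i+b_i=m_i$, which a direct inspection of the formulas for $a_i,b_i,m_i$ shows occurs only for $(k,m,p',q')\in\{(0,1,1,0),(0,1,0,1)\}$. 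The coefficient then collapses to $[x^py^qt^n]\bigl((1-xt)^{-e^{1,0}([X])}(1-yt)^{-e^{0,1}([X])}\bigr)$, and checking that this residual binomial expression vanishes under the hypothesis on $X$ (in particular using the implicit constraints imposed on its low-weight Hodge classes by the discussion preceding the proposition) is the one subtle step requiring further attention.
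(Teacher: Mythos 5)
Your overall strategy --- strip the trivial $(1-t)^{-e^{0,0}([X])}$ factor out of (\ref{hodgenumber}) and run a degree count on the remaining factors --- is the same one the paper's one-line proof gestures at, and your inequality $a+b\ge m$ for each surviving factor, hence $p+q\ge n$ for every surviving monomial $x^py^qt^n$, is correct. But the argument does not close, for two reasons. First, the WLOG step is backwards: assuming $p\le q$, the disjunction ``$n\ge 2p$ or $n\ge 2q$'' is equivalent to the \emph{weaker} inequality $n\ge 2p$, not to $n\ge 2q$; the chain $p+q\le 2q\le n$ is therefore not available, and the entire range $2\min(p,q)\le n<p+q$ (for instance $p=0$, $q$ arbitrary, any $n\ge 1$) is untouched by the bound $p+q\ge n$. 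Second, the residual term you isolate is a genuine obstruction rather than a loose end: the coefficient of $x^0y^1t^1$ on the right-hand side of (\ref{hodgenumber}) is exactly $e^{0,1}([X])$ (only the factor with $(m,k,p',q')=(1,0,0,1)$ can contribute), and $(n,p,q)=(1,0,1)$ satisfies $n\ge 2p$, so the proposition is already asserting $e^{0,1}([X])=0$ --- a vanishing that no inequality of the form $p+q\ge n$ can detect, since $0+1\ge 1$.

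What the proof actually needs is the per-variable bound $k+m+p'-1\ge m$ \emph{and} $k+m+q'-1\ge m$ for every factor with nonzero exponent, i.e.\ $p'\ge 1$ and $q'\ge 1$; then every surviving monomial has $p\ge n$ and $q\ge n$, and $n\ge 2p$ (or $n\ge 2q$) forces $n=0$. This is what the paper's proof implicitly uses, and it requires $e^{p',q'}([X])=0$ whenever $\min(p',q')=0$, not merely $e^{0,0}([X])=0$. To repair your argument you must either establish the vanishing of all $e^{p',0}([X])$ and $e^{0,q'}([X])$ for the class of $X$ under consideration or add it as a hypothesis; the single assumption $h^{0,0}([X])=0$ does not formally imply it for a general smooth connected quasi-projective threefold (compare $X=Y\setminus\{\mathrm{pt}\}$ with $Y$ a product of three elliptic curves, where $e^{0,0}([X])=0$ but $e^{0,1}([X])\neq 0$).
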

\begin{proof}
Since $e^{0,0}([X]) = 0$, the coefficient of $x^py^qt^n$ on the righthand side of (\ref{hodgenumber}) vanishes if $n \geq 2p$ or $n \geq 2q$.
\end{proof}

\section{Torsion Sheaves}
Let $\mathcal{T}_{n}(X)\to A$ be the moduli stack of length $n$ torsion sheaves on $X$ with the map to $A$ given by projecting to $A$ and taking the sum of the support of the sheaf with multiplicities. Collecting the motives of these stacks into a series $\mathcal{T}_X(t) = \sum_{n\geq 0} [\mathcal{T}_{n}(X)\to A]t^n$ it follows from Lemma \ref{geomstack} that we have
\[ \mathcal{T}_X(t) = \left( \sum_{n\geq 0} [\mathcal{H}_{n,r+g}\to 0\in A]t^n \right)^{[X\to A]} \]
where $\mathcal{H}_{n,r+g}$ is the stack of length $n$ torsion sheaves supported at the origin. We hereby denote as $\mathcal{H}_{r+g}(t)$ the sum inside the above bracket. 

Expanding $\mathcal{H}_{r+g}(t)$ as a formal series as in Section \ref{gensection} we see that all the Hodge numbers of $\mathcal{T}_{n}(X)$ are expressible in terms of the Hodge numbers of symmetric products of $X$. So again these numbers are deformation invariants of $X$.

\subsection{Curves} Here we assume $r+g =1$. The sum inside the bracket is given by Euler's formula \cite[Section 3.3]{BM} and equals
\[ \mathcal{H}_{1}(t) = \prod_{k = 1}^{\infty} (1-\mathbb{L}^{-k}t)^{-1}. \]

\begin{proposition}\label{stackcurve} Let $X\to A$ be a curve then
\[  [\mathcal{T}_{n}(X)\to A] = \sum_{\substack{ l, 0< r_1,\ldots , r_{l}  \\  n = r_1 + \cdots + r_l }} \prod_{i=1}^l\frac{ [\Sym^{r_i}(X)\to A] }{ \mathbb{L}^{s_i(r_1, r_2,\ldots)} -1 } \] 
where $s_i(r_1,r_2,\ldots) = \sum_{j\geq i}r_j$.
\end{proposition}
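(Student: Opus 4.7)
The plan is a direct computation with the generating series $\mathcal{T}_X(t)$, combining the identity $\mathcal{T}_X(t) = \mathcal{H}_1(t)^{[X\to A]}$ displayed just above the proposition with the Euler product $\mathcal{H}_1(t) = \prod_{k\geq 1}(1-\mathbb{L}^{-k}t)^{-1}$. Applying axiom (3) of the power structure inductively (passing to a $t$-adic limit, which is harmless since every factor equals $1 + O(t)$), the exponent distributes over the product to give $\mathcal{T}_X(t) = \prod_{k\geq 1}(1-\mathbb{L}^{-k}t)^{-[X\to A]}$. Each factor is then recognized as a zeta function: using Lemma \ref{zeta} together with its extension to negative powers of $\mathbb{L}$ from Theorem \ref{localps}, one identifies
\[
(1-\mathbb{L}^{-k}t)^{-[X\to A]} \;=\; \zeta_{[X\to A]}(\mathbb{L}^{-k}t) \;=\; \sum_{r\geq 0}\mathbb{L}^{-kr}[\Sym^{r}(X)\to A]\,t^{r}.
\]

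The next step is to extract the $t^n$ coefficient of the resulting infinite product. Choosing a power $r_k \geq 0$ of $t$ from each factor, subject to $\sum_{k} r_k = n$, and recording only the indices $k$ where $r_k > 0$, the data is equivalent to a strictly increasing sequence $k_1 < k_2 < \cdots < k_l$ of positive integers together with a composition $(\rho_1,\ldots,\rho_l)$ of $n$ (i.e.\ $\rho_i > 0$ with $\sum_i \rho_i = n$). The corresponding contribution is $\prod_{i=1}^{l}[\Sym^{\rho_i}(X)\to A]\cdot \mathbb{L}^{-\sum_{i} k_i \rho_i}$. For each fixed composition $(\rho_1,\ldots,\rho_l)$, I will carry out the sum over the $k_i$ by the substitution $d_1 = k_1$ and $d_i = k_i - k_{i-1}$ for $i\geq 2$. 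This converts the strict inequalities $k_1 < \cdots < k_l$ into the independent conditions $d_j \geq 1$, and a direct rearrangement gives $\sum_{i} k_i \rho_i = \sum_{j} d_j\, s_j(\rho)$ with $s_j(\rho) = \sum_{i\geq j}\rho_i$. The sum factors as
\[
\prod_{j=1}^{l}\sum_{d_j\geq 1}\mathbb{L}^{-d_j s_j(\rho)} \;=\; \prod_{j=1}^{l}\frac{1}{\mathbb{L}^{s_j(\rho)}-1},
\]
which upon multiplication by $\prod_i [\Sym^{\rho_i}(X)\to A]$ and summation over all compositions yields the stated formula.

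The main obstacle is bookkeeping of the ring in which all of this takes place. Because $\mathcal{T}_n(X)$ is a stack with affine stabilizers, the natural home is $K_0(\textup{St}/A) \cong K_0(\textup{Var}/A)[[\mathbb{L}^{-1}]]$, the completion with respect to the dimension filtration. In this ring the elements $\mathbb{L}^{s_j}-1$ are invertible, realized precisely as the convergent geometric series $-\sum_{d\geq 1}\mathbb{L}^{-d s_j}$ that appears in the calculation; the same finiteness makes the infinite product $\prod_k (1-\mathbb{L}^{-k}t)^{-[X\to A]}$ converge coefficient-wise in $t$; and Theorem \ref{stackps} guarantees that the power structure, axiom (3), and the substitution identity used in the first paragraph all extend to this completed ring. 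Once the computation is interpreted in this setting, every step is justified.
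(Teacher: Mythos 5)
Your argument is correct and is essentially the paper's own (very terse) proof written out in full: the same expansion of $\mathcal{H}_1(t)^{[X\to A]}$ into factors $\zeta_{[X\to A]}(\mathbb{L}^{-k}t)$, and the same telescoping substitution $d_i=k_i-k_{i-1}$ turning $\sum_i k_ir_i$ into $\sum_j d_j s_j(r)$ before summing the geometric series to get $(\mathbb{L}^{s_j}-1)^{-1}$. The only quibble is your identification $K_0(\textup{St}/A)\cong K_0(\textup{Var}/A)[[\mathbb{L}^{-1}]]$: by Theorem \ref{stack} the stacky ring is the \emph{localization} $K_0(\textup{Var}/A)[\mathbb{L}^{-1},(1-\mathbb{L}^{-n})^{-1}]$, which merely maps to the completion; since the closed forms $(\mathbb{L}^{s_j}-1)^{-1}$ already live in the localization and the completion is only used to resum the geometric series, this does not damage the argument (and the paper is equally silent on the point).
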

\begin{proof} Expanding the generating series $ \mathcal{T}_X(t)$ we get a sum over terms
\[ \prod_{i=1}^l [\textup{Sym}^{r_i}(X)\to A] \mathbb{L}^{-r_ik_i} t^{r_i} = \prod_{i=1}^l [\Sym^{r_i}(X)\to A] \mathbb{L}^{-s_i(r_1,\ldots, r_l)(k_i-k_{i-1})} t^{r_i} \]
indexed by $r_i,k_i\in \mathbb{N}$ such that $k_{i+1}-k_i> 0$. Now compare the $t^n$ coefficients.
\end{proof}
Note that given the curve is connected we have the following genus specific description of the motives $[\Sym^n(X)\to A]$ equal to
\begin{description}
  \item[$g(X)<1$] $[ \mathbb{P}_k^n \to 0 ]$,
  \item[$g(X)=1$] $[ A \to A ] \cdot [\mathbb{P}^{n-1}_k\to 0\in A ]$,
  \item[$g(X)>1$] $[ \textup{Jac}(X)\to 0 ] \cdot [\mathbb{P}^{n-g}_k\to 0]$ (when $n>2g -1$).
\end{description}

\subsection{Surfaces} Here we assume $r+g =2$. The sum inside the bracket can be deduced from a geometric consideration of commuting varieties \cite[Section 3.1]{BM}
\[ \mathcal{H}_{2}(t) = \prod_{m = 1}^{\infty} \prod_{k = 1}^{\infty} (1-\mathbb{L}^{-k}t^m)^{-1}. \]
\begin{proposition}\label{stacksurface} Let $X\to A$ be a surface then
\[  [\mathcal{T}_{n}(X)\to A] = \sum_{  \substack{ 0 < r_1^m,\ldots , r_{l_m}^m \\ n = \sum_{i,m} mr_i^m  }  } \prod_{m\geq1}m_*\left( \prod_{i=1}^{l_m}  \frac{ [\Sym^{r_i^m}(X)\to A] }{ \mathbb{L}^{s_i(r_1^m, r_2^m,\ldots)} -1 } \right)\]
where $s_i(r_1,r_2,\ldots) = \sum_{j\geq i}r_j$.  
\end{proposition}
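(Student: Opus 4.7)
The plan is to mirror the proof of Proposition \ref{stackcurve}, accommodating the additional product over $m$ via Lemma \ref{npow}. Writing
\[
\mathcal{H}_2(t) \;=\; \prod_{m \geq 1} G_m(t^m), \qquad G_m(s) \;:=\; \prod_{k \geq 1}(1 - \mathbb{L}^{-k} s)^{-1},
\]
axiom (3) of the power structure gives $\mathcal{T}_X(t) = \prod_{m \geq 1} G_m(t^m)^{[X \to A]}$, so it suffices to compute each $G_m(t^m)^{[X \to A]}$ separately and then collect terms.

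For a fixed $m \geq 1$, I would first apply Lemma \ref{npow} to rewrite $G_m(t^m)^{[X \to A]} = \left.G_m(t)^{m_*[X \to A]}\right|_{t \mapsto t^m}$, and then expand each inner factor via the stack version of Lemma \ref{zeta} supplied by Theorem \ref{stackps}:
\[
(1 - \mathbb{L}^{-k} t)^{-m_*[X \to A]} \;=\; \sum_{r \geq 0} m_*[\Sym^r(X) \to A] \cdot \mathbb{L}^{-kr} t^r.
\]
The key observation is that multiplication by $m$ on $A$ is a group homomorphism, so $m_*$ is a ring endomorphism of $K_0(\textup{St}/A)$; in particular, $m_*$ commutes with symmetric products of classes over $A$ and can be pulled outside any product arising when expanding $\prod_k$.

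The remaining step is combinatorial and identical to the curve case. Expanding $\prod_{k \geq 1}$ and indexing the $k$'s at which nonzero contributions $r_1^m, r_2^m, \ldots, r_{l_m}^m$ are picked by $1 \leq k_1 < k_2 < \cdots < k_{l_m}$, the substitution $k_i = a_1 + \cdots + a_i$ with $a_j \geq 1$ yields the telescoping identity
\[
\sum_{1 \leq k_1 < \cdots < k_{l_m}} \mathbb{L}^{-\sum_i k_i r_i^m} \;=\; \prod_{i=1}^{l_m} \frac{1}{\mathbb{L}^{s_i(r_1^m, r_2^m, \ldots)} - 1}.
\]
Thus the $t^{m \sum_i r_i^m}$ coefficient of $G_m(t^m)^{[X \to A]}$ equals $m_*\!\left(\prod_{i=1}^{l_m} [\Sym^{r_i^m}(X) \to A]/(\mathbb{L}^{s_i} - 1)\right)$. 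Multiplying these contributions over all $m \geq 1$ in $K_0(\textup{St}/A)$ and reading off the $t^n$ coefficient gives the stated formula. The telescoping identity is the only non-formal ingredient, exactly as in Proposition \ref{stackcurve}; the main book-keeping obstacle is ensuring $m_*$ interacts correctly with the stack product structure, which the ring-homomorphism property renders harmless.
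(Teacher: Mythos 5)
Your proposal is correct and follows essentially the same route as the paper, which simply says ``expand series as in Proposition~\ref{stackcurve}'': the expansion of each factor via the zeta-function property, the re-indexing $k_i = a_1+\cdots+a_i$ giving the telescoping identity $\sum_{k_1<\cdots<k_l}\mathbb{L}^{-\sum_i k_i r_i} = \prod_i(\mathbb{L}^{s_i}-1)^{-1}$, and Lemma~\ref{npow} to handle the $t^m$ substitution are exactly the ingredients the authors intend. The only added value in your write-up is that you make explicit the fact that $m_*$ is a ring homomorphism (so it can be pulled outside the product over $i$), a point the paper leaves implicit.
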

\begin{proof} Expand series as in Proposition \ref{stackcurve}. \end{proof}

\subsection{Threefolds} Here we assume $r+g =3$. In this case their are no known formulas for the motivic class of the stack of sheaves supported at a point. Instead consider the virtual motives $\mathcal{T}_X^{\textup{vir}}(t) = \sum_{n\geq 0} [\mathcal{T}_{n}(X)\to A]_{\textup{vir}}t^n$ by definition \cite{BBS} we have $\mathcal{T}_X^{\textup{vir}}(t)= \left( \mathcal{H}_{3}^{\textup{vir}}(t) \right)^{[X\to A]}$ where 
\[ \mathcal{H}_{3}^{\textup{vir}}(t) = \left( \mathcal{H}_{2}(t) \right)^{\frac{[\C^2]}{[\C^3]}} = \prod_{m= 1}^\infty\prod_{k=1 }^\infty  (1-\mathbb{L}^{-k-1}t^m)^{-1} \]
is the series for the virtual motives of the stack of sheaves supported at a point.
\begin{proposition}\label{stack3fold} Let $X\to A$ be a threefold then
\[  [\mathcal{T}_{n}(X)\to A]_{\textup{vir}} = \sum_{  \substack{ 0 < r_1^m,\ldots , r_{l_m}^m \\ n = \sum_{i,m} mr_i^m  }  }  \prod_{m\geq1}m_*\left( \prod_{i=1}^{l_m}  \mathbb{L}^{-r_{i}^m}\frac{ [\Sym^{r_i^m}(X)\to A] }{ \mathbb{L}^{s_i(r_1^m, r_2^m,\ldots)} -1 } \right)\]
where $s_i(r_1,r_2,\ldots) = \sum_{j\geq i}r_j$.  
\end{proposition}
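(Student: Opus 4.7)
The plan is to follow the strategy used for Propositions \ref{stackcurve} and \ref{stacksurface}: expand both sides of $\mathcal{T}_X^{\textup{vir}}(t) = (\mathcal{H}_3^{\textup{vir}}(t))^{[X\to A]}$ and read off the $t^n$ coefficient using the power structure machinery from Section 2.

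First I would apply axioms (3) and (4) of the power structure (extended to $K_0(\textup{St}/A)[\mathbb{L}^{-1/2}]$ via Theorem \ref{stackps} and the discussion following Theorem \ref{localps}) to distribute the $[X\to A]$-power over the double product, obtaining
\[
\mathcal{T}_X^{\textup{vir}}(t) \;=\; \prod_{m\geq 1}\prod_{k\geq 1}\bigl(1-\mathbb{L}^{-k-1}t^m\bigr)^{-[X\to A]}.
\]
Next I would evaluate each factor by combining Lemma \ref{npow} (which converts $t\mapsto t^m$ into the pushforward $m_*$ along multiplication by $m$ on $A$) with Lemma \ref{lef} extended to negative powers by Theorem \ref{localps} (which handles the $\mathbb{L}^{-k-1}$ scaling), obtaining
\[
\bigl(1-\mathbb{L}^{-k-1}t^m\bigr)^{-[X\to A]} \;=\; \sum_{r\geq 0}\mathbb{L}^{-r(k+1)}\cdot m_*[\textup{Sym}^r(X)\to A]\cdot t^{mr}.
\]

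Next, for each fixed $m$, I would expand the inner product over $k\geq 1$: a nontrivial term corresponds to a strictly increasing finite sequence $1\leq k_1<\cdots<k_{l_m}$ of indices decorated with positive multiplicities $r_i^m$ (the $k$'s outside this list contribute the $r=0$ term). Reparametrising $k_i = j_1+\cdots+j_i$ with each $j_i\geq 1$ decouples the sum into independent geometric series in $\mathbb{L}^{-1}$, producing precisely the denominators $(\mathbb{L}^{s_i(r_1^m,r_2^m,\dots)}-1)^{-1}$ that appear in the curve case. The extra factor $\mathbb{L}^{-r_i^m}$ per index $i$ in the statement arises solely from the shift $k\mapsto k+1$ in the exponent of $\mathbb{L}$, which is the only feature distinguishing $\mathcal{H}_3^{\textup{vir}}$ from $\mathcal{H}_2$. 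Taking the outer product over $m$ — which in $K_0(\textup{Var}/A)[\mathbb{L}^{-1/2}]$ corresponds to fiberwise addition of the maps to $A$ — and collecting the $t^n$ coefficient yields exactly the indexed sum stated.

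The only genuinely nontrivial step is the first one: checking that each factor $(1-\mathbb{L}^{-k-1}t^m)^{-[X\to A]}$ may be rewritten rigorously inside the $\mathbb{L}^{-1/2}$-completed ring, where several lemmas are applied in succession to move the $m$ through the pushforward and the $\mathbb{L}^{-k-1}$ through the zeta function. This is precisely what the compatibility results assembled in Section 2 (Lemmas \ref{lef} and \ref{npow} together with Theorem \ref{localps}) are designed to guarantee. Once this is in hand, everything that follows is purely combinatorial bookkeeping identical to the curve and surface proofs.
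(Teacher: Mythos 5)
Your proposal is correct and follows essentially the same route as the paper: the paper's proof is simply ``a slight modification of the surface case,'' which in turn expands the series as in Proposition \ref{stackcurve}, and your reindexing $k_i = j_1+\cdots+j_i$ is exactly the Abel-summation step $\sum_i r_ik_i=\sum_i s_i(k_i-k_{i-1})$ used there, with the extra $\mathbb{L}^{-r_i^m}$ correctly traced to the shift $k\mapsto k+1$ in $\mathcal{H}_3^{\textup{vir}}$ versus $\mathcal{H}_2$. The supporting details you supply (distributing the exponent via the power-structure axioms, Lemmas \ref{lef} and \ref{npow}, and the compatibility of $m_*$ with the convolution product) are all consistent with what the paper leaves implicit.
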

\begin{proof} A slight modification of the surface case. \end{proof}

\subsection{Kummer fibers} To extract the Kummer fibers from Proposition \ref{stackcurve}, Proposition \ref{stacksurface}, or Proposition \ref{stack3fold} one can use this simple lemma extending Lemma \ref{symkummer}.
 
\begin{lemma}\label{sym2kummer} Let $X\to A$ be a Zariski trivial fiber bundle over a $g$-dimensional complex abelian variety $A$ then,
\[ [A]\cdot0^*\left( \prod_{m\geq 1} m_*\left(\prod_{i=1}^{l_m}[\textup{Sym}^{r^m_i}(X)\to A] \right)\right) = g(r)^{2g} \cdot  \prod_{m,i} [\textup{Sym}^{r^m_i}(X)]  \]
in the ring $K_0(\textup{HS})$ of Hodge structures with $g(r) = \textup{gcd}(\{ m: r^m_i \neq 0 \textrm{ for some } i\}).$
 \end{lemma}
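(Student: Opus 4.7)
The plan is to mimic the proof of Lemma~\ref{symkummer} essentially verbatim, since the new statement only differs by the presence of the extra index $i$ and this introduces no substantive new difficulty. Since the motivic class $[\textup{Sym}^{r^m_i}(X)\to A]$ depends only on $X$ as a Zariski locally trivial $A$-fibration and not on the trivialization, we may assume without loss of generality that $X = Y \times A$.

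First I treat the case $g(r)=1$. Let $X^{(r)} = \prod_{m\geq 1}\prod_{i=1}^{l_m}\textup{Sym}^{r^m_i}(X)$ equipped with the map $\sigma : X^{(r)} \to A$ that first projects each factor onto its symmetric product of $A$, then adds with the $(m,i)$-factor scaled by $m$. Let $K^{(r)} := \sigma^{-1}(0)$. Exactly as in the proof of Lemma~\ref{symkummer}, the natural multiplication map $A \times K^{(r)} \to X^{(r)}$ is a quotient by the finite subgroup $A(n) \subset A$ of $n$-torsion points, where $n = \sum_{m,i} m\, r^m_i$; the hypothesis $g(r)=1$ ensures $\sigma$ is surjective so that the fiber cardinality is correct. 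The statement in $K_0(\textup{HS})$ then reduces to the claim that $A(n)$ acts trivially on $H^*_c(A \times K^{(r)})$. By the same reduction used in \cite[Theorem~7, Step (7)]{GS}, it suffices to prove triviality on the $S_{(r)}$-equivariant cartesian power $A \times K^r$, after which the $Y$-factors split off and the argument becomes identical to the one in \cite{GS} for the purely abelian case.

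For $g(r) > 1$ I reduce to the previous case exactly as in the proof of Lemma~\ref{symkummer}: the pullback over zero decomposes as
\[
0^*\!\left( \prod_{m,i} m_*[\textup{Sym}^{r^m_i}(X)\to A] \right)
= \sum_{x\in A(g(r))} x^*\!\left( \prod_{m,i} \bigl(\tfrac{m}{g(r)}\bigr)_*[\textup{Sym}^{r^m_i}(X)\to A] \right),
\]
where each summand satisfies the $\gcd = 1$ hypothesis and therefore contributes $\prod_{m,i}[\textup{Sym}^{r^m_i}(X)]/[A]$ by the previous step. Summing over the $g(r)^{2g}$ torsion points of $A(g(r))$ produces the claimed factor $g(r)^{2g}$ and proves the lemma.

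The only step requiring any thought is verifying that the $A(n)$-action triviality argument from \cite{GS} still goes through in the multi-index setting. This is not really an obstacle: the argument only uses that each factor of $X$ carries an $A$-direction along which translation absorbs the torsion action, and this remains true when several factors carry the same scaling index $m$. Thus the two-index refinement is transparent, and no new ingredient beyond Lemma~\ref{symkummer} itself is needed.
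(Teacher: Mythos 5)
Your proposal is correct and follows essentially the same route as the paper, which likewise reduces Lemma~\ref{sym2kummer} to the proof of Lemma~\ref{symkummer}; the paper phrases the one genuine difference as passing to invariant cohomology for the Young subgroups $S_{r_1^m}\times\cdots\times S_{r_{l_m}^m}\subset S_{\sum_i r_i^m}$, which is exactly your $S_{(r)}$-equivariant cartesian power over the pairs $(m,i)$. (One small imprecision: surjectivity of $\sigma$ holds for any $r$; the role of $g(r)=1$ is rather to make the $A(n)$-action on $H^*_c(A\times K^{(r)})$ trivial, as in \cite{GS}, but this does not affect your argument.)
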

 \begin{proof}
 Here the proof follows that of Lemma \ref{symkummer}. The only difference is that we work with the invariant cohomology for the Young subgroups $S_{r_1^m}\times\cdots \times S_{r_{l_m}^m}\subset S_{\sum_i r_i^m}$ instead of the full symmetric groups.
 \end{proof}
 Let us denote by $[\mathcal{K}_n(X)]$ the class of the Kummer fiber.  Assuming $g\geq 1$ we record the following formulas for the $\chi_y$ genera of these stacks.

\begin{corollary} 
For $X\to A$ a curve, surface, or 3-fold with fiber $Y$ and $g\geq 1$, we have in order
\begin{eqnarray*}
\chi_{-y}([\mathcal{K}_n(X)]) &=& \frac{1}{y-1} \\
\chi_{-y}([\mathcal{K}_n(X)]) &=&  \frac{n^{g-1}}{y-1} \sum_{d\cdot m = n} \chi_{-y^m}(Y) \cdot d^{g+1} \left( \sum_{i=0}^{m-1} y^i \right)^{g-1} \\
\chi_{-y}([\mathcal{K}_n(X)]_{\textup{vir}}) &=&   \frac{n^{g-1}}{y-1} \sum_{d\cdot m = n} y^{-m} \chi_{-y^m}(Y) \cdot d^{g+1} \left( \sum_{i=0}^{m-1} y^i \right)^{g-1}.
\end{eqnarray*}
\end{corollary}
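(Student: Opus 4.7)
The plan is to apply Lemma \ref{sym2kummer} to Propositions \ref{stackcurve}, \ref{stacksurface}, and \ref{stack3fold} in order to extract the class of the Kummer fiber. In each case this yields an identity of the form
\[
[A]\cdot [\mathcal{K}_n(X)] \;=\; \sum_{(r_i^m)} g(r)^{2g}\prod_{m,i} \frac{[\Sym^{r_i^m}(X)]}{\mathbb{L}^{s_i(r_1^m,\ldots)}-1},
\]
indexed by tuples $(r_i^m)$ with $n=\sum_{i,m} m r_i^m$; in the 3-fold case there is an additional factor $\mathbb{L}^{-r_i^m}$ and we work with the virtual variant. I then divide by $[A]$ and apply the $\chi_{-y}$ specialization of the Hodge--Deligne homomorphism term by term.

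The key observation, borrowed from the proof of Lemma \ref{Kummerchiy}, is a vanishing-order argument that kills most configurations. Since $E([A];x,y) = (1-x)^g(1-y)^g$ and the fibration is Zariski locally trivial, $E([\Sym^k(X)];x,y)$ vanishes to order at least $g$ at $x=1$ by Macdonald's formula (\ref{Macdonald}): each summand indexed by $\alpha\vdash k$ contains the factor $\prod_j(1-x^j)^{g b_j(\alpha)}$ from the base $A$. A product of $N$ such symmetric products vanishes to order at least $gN$ at $x=1$, whereas the single denominator $E([A];x,y)$ cancels only order $g$. Consequently only tuples with exactly one nontrivial factor contribute to the specialization $x=1$, and within that factor only the one-part partition in Macdonald's sum survives. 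Such tuples are parametrized by pairs $(m,d)$ with $md=n$, the unique nonzero entry being $r_1^m=d$, with $g(r)=m$ and $s_1=d$.

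The contribution from such a pair equals
\[
m^{2g}\cdot \chi_{-y}\!\left(\frac{[\Sym^d(X)]}{[A]}\right)\cdot \chi_{-y}\!\left(\frac{1}{\mathbb{L}^d-1}\right)
\]
(times $y^{-d}$ in the 3-fold virtual case). Lemma \ref{Kummerchiy} computes the first factor as $\chi_{-y^d}(Y)\cdot d^{g-1}(1+y+\cdots+y^{d-1})^g$, while direct expansion of $(\mathbb{L}^d-1)^{-1}$ into a geometric series yields $1/(y^d-1)$ for the second. Using $(1+y+\cdots+y^{d-1})^g=(y^d-1)^g/(y-1)^g$ produces a single factor of $(y-1)^{-1}$ out front together with $(1+y+\cdots+y^{d-1})^{g-1}$, and after relabeling $(m,d)\mapsto(d,m)$ the sum over divisors $md=n$ collects precisely into the stated formulas for the curve, surface, and 3-fold cases (the curve case degenerates to a single term since $g=1$, $r=0$ forces $\chi_{-y^m}(Y)=1$ and the geometric series collapses).

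The main subtlety will be bookkeeping: verifying that the factors $(\mathbb{L}^{s_i}-1)^{-1}$ commute cleanly through $m_*$ and $0^*$ in the application of Lemma \ref{sym2kummer} (they do, since they live over $0\in A$), and correctly tracking the $\mathbb{L}^{-r_i^m}$ twist in the 3-fold case, which after specialization becomes the extra factor $y^{-m}$ distinguishing the third formula from the second.
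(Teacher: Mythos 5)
Your proposal is correct and follows exactly the route the paper takes: the paper's proof is precisely the one-line citation of Propositions \ref{stackcurve}, \ref{stacksurface}, \ref{stack3fold} together with Lemmas \ref{Kummerchiy} and \ref{sym2kummer}, and your vanishing-order argument at $x=1$ is the same mechanism underlying Lemma \ref{Kummerchiy}. The bookkeeping you flag (the $(\mathbb{L}^{s_i}-1)^{-1}$ factors living over $0\in A$, the $y^{-m}$ twist in the 3-fold case, and the collapse of the curve case to the single composition $r_1=n$) all checks out against the stated formulas.
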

\begin{proof}Follows from Proposition \ref{stackcurve}, Proposition \ref{stacksurface}, and Proposition \ref{stack3fold} together with Lemma \ref{Kummerchiy} and Lemma \ref{sym2kummer}. 
\end{proof}

\end{document}